\newcommand{\kw}{\rule{2mm}{2mm}}
\newcommand{\qed}{\hfill \kw}
\newcommand{\pc}{\overrightarrow{pc}}
\newcommand{\wc}{\overrightarrow{wc}}
\newcommand{\Ri}{\curvearrowright}
\newcommand{\ri}{\rightarrow}
\newcommand{\Rri}{\leadsto}
\newcommand{\dist}{\mathrm{dist}}
\newcommand{\vf}{\varphi}
\newenvironment{proof}{\noindent\textbf{Proof.}}{\hfill$\qed$\medskip} 
\newtheorem{theorem}{Theorem}
\newtheorem{lemma}[theorem]{Lemma}
\newtheorem{prop}[theorem]{Proposition}
\newtheorem{obs}[theorem]{Observation}
\newtheorem{conjecture}{Conjecture}
\newtheorem{corollary}[theorem]{Corollary}
\newtheorem{question}{Question} 
\newcommand{\caseTwo}
{
\begin{tikzpicture}[>=stealth']
\draw [thick,dotted](0,0) circle (3);

\draw [fill] (80:3) circle (0.05);
\draw [fill] (10:3) circle (0.05);
\draw [fill] (-60:3) circle (0.05);
\draw [fill] (-120:3) circle (0.05);
\draw [fill] (-140:3) circle (0.05);
\draw [fill] (100:3) circle (0.05);

\draw [fill] (-10:3) circle (0.05);
\draw [fill] (-40:3) circle (0.05);
\draw [fill] (140:3) circle (0.05);
\draw[->,thick,red,double,dashed](140:3)--(-10:3);
\draw[->,thick,red,double,dashed] (-10:3) arc (-10:-40:3);
\draw[->,thick,red,double,dashed] (-40:3) arc (-40:-60:3);

\draw[->,thick] (80:3) arc (80:10:3);

\draw[->,thick] (-60:3) arc (-60:-120:3);

\draw[->,thick] (220:3) arc (220:100:3);

\draw[->,thick](100:3)--(-60:3);

\draw[->,thick](-120:3)--(80:3);

\draw[->,thick](10:3)--(220:3);

\node [above] at (80:3) {$x_1$};
\node  [above] at (100:3) {$x_0$};

\node  [right] at (10:3) {$x_{r+2}$};
\node  [below] at (-60:3) {$x_k$};
\node  [below] at (-120:3) {$x_{k+r+1}$};
\node  [left] at (220:3) {$x_{k+r+2}$};

\node  [right] at (-10:3) {$x_{r+3}$};
\node  [right] at (-40:3) {$x_{k-1}$};
\node  [left] at (140:3) {$x_{k+2r+3}$};
\end{tikzpicture}
}
\newcommand{\caseThreeTwo}
{
\begin{tikzpicture}[>=stealth']
\draw [thick,dotted](0,0) circle (3);

\draw [fill] (0:3) circle (0.05);
\draw [fill] (20:3) circle (0.05);
\draw [fill] (50:3) circle (0.05);
\draw [fill] (70:3) circle (0.05);
\draw [fill] (90:3) circle (0.05);
\draw [fill] (140:3) circle (0.05);
\draw [fill] (160:3) circle (0.05);
\draw [fill] (-100:3) circle (0.05);
\draw [fill] (-80:3) circle (0.05);
\draw [fill] (-20:3) circle (0.05);

\draw[->,thick] (280:3) arc (280:160:3);
\draw[->,thick](160:3)--(0:3);
\draw[->,thick](0:3)--(90:3);
\draw[->,thick](90:3)--(-80:3);

\draw[->,thick,red,double,dashed] (50:3) arc (50:20:3);
\draw[->,thick,red,double,dashed] (20:3) arc (20:0:3);
\draw[->,thick,red,double,dashed](280:3)--(50:3);

\draw[->,thick,green,double] (160:3) arc (160:140:3);
\draw[->,thick,green,double](140:3)--(-20:3);
\draw[->,thick,green,double](-20:3)--(70:3);
\draw[->,thick,green,double](70:3)--(-100:3);

\node  [above] at (90:3) {$x_{0}$};
\node  [right] at (0:3) {$x_{n-k}$};
\node  [right] at (-80:3) {$x_{n-k+s+2}$};
\node  [left] at (160:3) {$x_{2(n-k)}$};
\node  [right] at (50:3) {$x_{s+2}$};
\node  [right] at (20:3) {$x_{n-k-1}$};
\node  [left] at (-100:3) {$x_{n-k+s+3}$};
\node  [left] at (140:3) {$x_{2(n-k)+1}$};
\node  [above] at (70:3) {$x_{1}$};
\node  [right] at (-20:3) {$x_{n-k+1}$};

\end{tikzpicture}
}
\newcommand{\caseThreeFive}
{
\begin{tikzpicture}[>=stealth']
\draw [thick,dotted](0,0) circle (3);

\draw [fill] (40:3) circle (0.05);
\draw [fill] (90:3) circle (0.05);
\draw [fill] (110:3) circle (0.05);
\draw [fill] (160:3) circle (0.05);
\draw [fill] (180:3) circle (0.05);
\draw [fill] (220:3) circle (0.05);
\draw [fill] (260:3) circle (0.05);
\draw [fill] (300:3) circle (0.05);
\draw [fill] (350:3) circle (0.05);
\draw [fill] (280:3) circle (0.05);
\draw [fill] (200:3) circle (0.05);

\draw[->,dashed] (300:3) arc (300:280:3);
\draw[->,dashed] (280:3) arc (280:260:3);

\draw[->,dashed] (220:3) arc (220:200:3);
\draw[->,dashed] (200:3) arc (200:180:3);

\draw[->,thick] (160:3) arc (160:110:3);
\draw[->,thick](90:3)--(160:3);
\draw[->,thick](110:3)--(180:3);

\draw[->,thick,red,double,dashed] (260:3) arc (260:220:3);
\draw[->,thick,red,double,dashed] (180:3) --(260:3);
\draw[->,thick,red,double,dashed] (220:3)--(300:3);

\draw[->,thick,green,double](300:3)--(350:3);
\draw[->,thick,green,double](350:3)--(40:3);
\draw[->,thick,green,double](40:3)--(90:3);

\node  [above] at (90:3) {$x_{0}$};
\node  [right] at (40:3) {$x_{n-k}$};
\node  [right] at (350:3) {$x_{2(n-k)}$};
\node  [right] at (300:3) {$x_{3(n-k)}$};
\node  [below] at (260:3) {$x_{3(n-k)+2}$};
\node  [left] at (220:3) {$x_{4(n-k)}$};
\node  [left] at (180:3) {$x_{4(n-k)+2}$};
\node  [left] at (160:3) {$x_{4(n-k)+3}$};
\node  [left] at (110:3) {$x_{5(n-k)+2}$};

\end{tikzpicture}
}
\newcommand{\caseThreeFour}
{
\begin{tikzpicture}[>=stealth']
\draw [thick,dotted](0,0) circle (3);

\draw [fill] (90:3) circle (0.05);
\draw [fill] (110:3) circle (0.05);
\draw [fill] (160:3) circle (0.05);
\draw [fill] (180:3) circle (0.05);
\draw [fill] (200:3) circle (0.05);
\draw [fill] (250:3) circle (0.05);
\draw [fill] (270:3) circle (0.05);
\draw [fill] (310:3) circle (0.05);
\draw [fill] (340:3) circle (0.05);
\draw [fill] (20:3) circle (0.05);

\draw[->,thick] (160:3) arc (160:110:3);
\draw[->,thick](90:3)--(160:3);
\draw[->,thick](110:3)--(180:3);
\draw[->,thick] (250:3) arc (250:200:3);
\draw[->,thick](180:3)--(250:3);
\draw[->,thick](200:3)--(270:3);

\draw[->,thick,red,double,dashed] (340:3) arc (340:310:3);
\draw[->,thick,red,double,dashed] (270:3) --(340:3);
\draw[->,thick,red,double,dashed](310:3)--(20:3);

\draw[->,thick,green,double](20:3)--(90:3);

\node  [above] at (90:3) {$x_{0}$};
\node  [above] at (110:3) {$x_{4(n-k)+s+3}$};
\node  [right] at (20:3) {$x_{n-k}$};
\node  [right] at (340:3) {$x_{n-k+s+2}$};
\node  [right] at (310:3) {$x_{2(n-k)}$};
\node  [below] at (270:3) {$x_{2(n-k)+s+2}$};
\node  [left] at (250:3) {$x_{2(n-k)+s+3}$};
\node  [left] at (200:3) {$x_{3(n-k)+s+2}$};
\node  [left] at (180:3) {$x_{3(n-k)+s+3}$};
\node  [left] at (160:3) {$x_{3(n-k)+s+4}$};

\end{tikzpicture}
}
\newcommand{\caseThreeOne}
{
\begin{tikzpicture}[>=stealth']
\draw [thick,dotted](0,0) circle (3);

\draw [fill] (90:3) circle (0.05);
\draw [fill] (110:3) circle (0.05);
\draw [fill] (160:3) circle (0.05);
\draw [fill] (180:3) circle (0.05);
\draw [fill] (200:3) circle (0.05);
\draw [fill] (250:3) circle (0.05);
\draw [fill] (270:3) circle (0.05);
\draw [fill] (290:3) circle (0.05);
\draw [fill] (340:3) circle (0.05);
\draw [fill] (0:3) circle (0.05);

\draw[->,thick] (160:3) arc (160:110:3);
\draw[->,thick](90:3)--(160:3);
\draw[->,thick](110:3)--(180:3);
\draw[->,thick] (250:3) arc (250:200:3);
\draw[->,thick](180:3)--(250:3);
\draw[->,thick](200:3)--(270:3);
\draw[->,thick] (340:3) arc (340:290:3);
\draw[->,thick](270:3)--(340:3);
\draw[->,thick](290:3)--(0:3);

\draw[->,thick,green,double](0:3)--(90:3);

\node  [above] at (90:3) {$x_{0}$};
\node  [above] at (110:3) {$x_{4(n-k)+2}$};
\node  [right] at (0:3) {$x_{n-k}$};
\node  [right] at (340:3) {$x_{n-k+1}$};
\node  [right] at (290:3) {$x_{2(n-k)}$};
\node  [below] at (270:3) {$x_{2(n-k)+1}$};
\node  [left] at (250:3) {$x_{2(n-k)+2}$};
\node  [left] at (200:3) {$x_{3(n-k)+1}$};
\node  [left] at (180:3) {$x_{3(n-k)+2}$};
\node  [left] at (160:3) {$x_{3(n-k)+3}$};

\end{tikzpicture}
}
\newcommand{\caseOneThreeOne}
{
\begin{tikzpicture}[>=stealth']
\draw [thick,dotted](0,0) circle (3);

\draw [fill] (90:3) circle (0.05);
\draw [fill] (70:3) circle (0.05);
\draw [fill] (30:3) circle (0.05);
\draw [fill] (10:3) circle (0.05);
\draw [fill] (330:3) circle (0.05);
\draw [fill] (310:3) circle (0.05);
\draw [fill] (250:3) circle (0.05);
\draw [fill] (230:3) circle (0.05);
\draw [fill] (210:3) circle (0.05);
\draw [fill] (170:3) circle (0.05);
\draw [fill] (150:3) circle (0.05);
\draw [fill] (130:3) circle (0.05);

\draw[->,thick,blue,dashed] (90:3)--(10:3);
\draw[->,thick,blue,dashed] (10:3) arc (10:-30:3);
\draw[->,thick,blue,dashed] (-30:3)--(250:3);

\draw[->,thick,] (70:3) arc (70:30:3);
\draw[->,thick] (30:3)--(310:3);
\draw[->,thick] (310:3)--(230:3);

\draw[->,thick,red,dashed,double] (230:3) arc (230:210:3);
\draw[->,thick,red,dashed,double] (210:3)--(130:3);
\draw[->,thick,red,dashed,double] (130:3) arc (130:90:3);

\draw[->,thick,green,double](250:3)--(170:3);
\draw[->,thick,green,double] (170:3) arc (170:150:3);
\draw[->,thick,green,double](150:3)--(70:3);

\node  [above] at (90:3) {$x_{0}$};
\node  [above] at (70:3) {$x_{1}$};
\node  [right] at (30:3) {$x_{k-1}$};
\node  [right] at (10:3) {$x_{k}$};
\node  [right] at (330:3) {$x_{2k-2}$};
\node  [right] at (310:3) {$x_{2k-1}$};
\node  [below] at (250:3) {$x_{3k-2}$};
\node  [below] at (230:3) {$x_{3k-1}$};
\node  [left] at (210:3) {$x_{3k}$};
\node  [left] at (170:3) {$x_{4k-2}$};
\node  [left] at (150:3) {$x_{4k-1}$};
\node  [left] at (130:3) {$x_{4k}$};
\end{tikzpicture}
}
\newcommand{\caseOneOne}
{
\begin{tikzpicture}[>=stealth']
\draw [thick,dotted](0,0) circle (3);

\draw [fill] (90:3) circle (0.05);
\draw [fill] (70:3) circle (0.05);
\draw [fill] (20:3) circle (0.05);
\draw [fill] (0:3) circle (0.05);
\draw [fill] (310:3) circle (0.05);
\draw [fill] (290:3) circle (0.05);
\draw [fill] (250:3) circle (0.05);
\draw [fill] (230:3) circle (0.05);
\draw [fill] (200:3) circle (0.05);
\draw [fill] (170:3) circle (0.05);
\draw [fill] (140:3) circle (0.05);
\draw [fill] (120:3) circle (0.05);

\draw[->,thick,blue,dashed] (90:3)--(20:3);
\draw[->,thick,blue,dashed] (20:3) -- (310:3);
\draw[->,thick,blue,dashed] (310:3)--(250:3);

\draw[->,thick,] (70:3) -- (0:3);
\draw[->,thick] (0:3)--(290:3);
\draw[->,thick] (290:3)--(230:3);

\draw[->,thick,green,double] (250:3)--(170:3);
\draw[->,thick,green,double] (170:3) arc (170:140:3);
\draw[->,thick,green,double] (140:3) -- (70:3);

\draw[->,thick,red,dashed,double] (230:3) arc (230:200:3);
\draw[->,thick,red,dashed,double](200:3)--(120:3);
\draw[->,thick,red,dashed,double](120:3) arc (120:90:3);

\node  [above] at (90:3) {$x_{0}$};
\node  [above] at (70:3) {$x_{1}$};
\node  [right] at (20:3) {$x_{k}$};
\node  [right] at (0:3) {$x_{k+1}$};
\node  [right] at (310:3) {$x_{2k}$};
\node  [below] at (290:3) {$x_{2k+1}$};
\node  [below] at (250:3) {$x_{3k}$};
\node  [left] at (230:3) {$x_{3k+1}$};
\node  [left] at (200:3) {$x_{3k+r+2}$};
\node  [left] at (170:3) {$x_{4k}$};
\node  [left] at (140:3) {$x_{4k+r+1}$};
\node  [left] at (120:3) {$x_{4k+r+2}$};

\end{tikzpicture}
}
\newcommand{\caseOneTwo}
{
\begin{tikzpicture}[>=stealth']
\draw [thick,dotted](0,0) circle (3);

\draw [fill] (90:3) circle (0.05);
\draw [fill] (70:3) circle (0.05);
\draw [fill] (355:3) circle (0.05);
\draw [fill] (335:3) circle (0.05);
\draw [fill] (260:3) circle (0.05);
\draw [fill] (240:3) circle (0.05);
\draw [fill] (170:3) circle (0.05);
\draw [fill] (150:3) circle (0.05);

\draw[->,thick,blue,dashed] (90:3)--(355:3);
\draw[->,thick,blue,dashed] (355:3) -- (260:3);
\draw[->,thick,blue,dashed] (260:3)--(170:3);

\draw[->,thick,] (70:3) -- (335:3);
\draw[->,thick] (335:3)--(240:3);
\draw[->,thick] (240:3)--(150:3);

\draw[->,thick,green,double] (170:3) -- (70:3);

\draw[->,thick,red,double,dashed] (150:3) arc (150:90:3);

\node  [above] at (90:3) {$x_{0}$};
\node  [above] at (70:3) {$x_{1}$};
\node  [right] at (355:3) {$x_{k}$};
\node  [right] at (335:3) {$x_{k+1}$};
\node  [below] at (260:3) {$x_{2k}$};
\node  [left] at (240:3) {$x_{2k+1}$};
\node  [left] at (170:3) {$x_{3k}$};
\node  [left] at (150:3) {$x_{3k+1}$};

\end{tikzpicture}
}
\newcommand{\caseOneThreeTwo}
{
\begin{tikzpicture}[>=stealth']
\draw [thick,dotted](0,0) circle (3);

\draw [fill] (90:3) circle (0.05);
\draw [fill] (80:3) circle (0.05);
\draw [fill] (70:3) circle (0.05);
\draw [fill] (350:3) circle (0.05);
\draw [fill] (340:3) circle (0.05);
\draw [fill] (330:3) circle (0.05);
\draw [fill] (320:3) circle (0.05);
\draw [fill] (310:3) circle (0.05);
\draw [fill] (230:3) circle (0.05);
\draw [fill] (220:3) circle (0.05);
\draw [fill] (210:3) circle (0.05);
\draw [fill] (200:3) circle (0.05);
\draw [fill] (190:3) circle (0.05);
\draw [fill] (110:3) circle (0.05);
\draw [fill] (100:3) circle (0.05);

\draw[->,thick,] (90:3) -- (320:3);
\draw[->,thick] (320:3)--(190:3);
\draw[->,thick] (190:3) arc (190:110:3);
\draw[->,thick] (110:3)--(340:3);
\draw[->,thick] (340:3)--(210:3);
\draw[->,thick] (210:3)--(90:3);

\draw[->,thick,green,double] (70:3) arc (70:-10:3);

\draw[->,thick,red,double,dashed] (310:3) arc (310:230:3);

\node  [above] at (90:3) {$x_{0}$};
\node  [above] at (80:3) {$x_{1}$};
\node  [right] at (70:3) {$x_{2}$};
\node  [right] at (350:3) {$x_{k-3}$};
\node  [right] at (340:3) {$x_{k-2}$};
\node  [right] at (320:3) {$x_{k}$};
\node  [right] at (310:3) {$x_{k+1}$};
\node  [left] at (230:3) {$x_{2k-4}$};
\node  [left] at (210:3) {$x_{2k-2}$};
\node  [left] at (190:3) {$x_{2k}$};
\node  [left] at (110:3) {$x_{3k-4}$};

\end{tikzpicture}
}
\newcommand{\CSevenThree}
{
\begin{tikzpicture}[>=stealth']

\draw [fill] (39:2) circle (0.05);
\draw [fill] (141:2) circle (0.05);
\draw [fill] (90:2) circle (0.05);
\draw [fill] (192:2) circle (0.05);
\draw [fill] (-12:2) circle (0.05);
\draw [fill] (243:2) circle (0.05);
\draw [fill] (-63:2) circle (0.05);

\draw[->,thick,blue] (90:2) -- (39:2);
\draw[->,thick,blue] (90:2)--(-63:2);
\draw[->,thick,blue] (141:2) -- (-12:2);
\draw[->,thick,blue] (192:2)--(142:2);
\draw[->,thick,blue] (192:2)--(39:2);
\draw[->,thick,blue] (243:2)--(90:2);
\draw[->,thick,blue] (-12:2)--(-63:2);
\draw[->,thick,blue] (39:2)--(243:2);

\draw[->,thick,red,dashed] (141:2) -- (90:2);
\draw[->,thick,red,dashed] (243:2) -- (192:2);
\draw[->,thick,red,dashed] (-63:2) -- (243:2);
\draw[->,thick,red,dashed] (-63:2) -- (141:2);
\draw[->,thick,red,dashed] (-12:2) -- (192:2);
\draw[->,thick,red,dashed] (39:2) -- (-12:2);

\end{tikzpicture}
}
\newcommand{\sectionThreeOne}
{
\begin{tikzpicture}[>=stealth']
\draw [thick,dotted](0,0) circle (3);

\draw[->,thick,red,double] (60:3) arc (60:45:3);
\draw[->,thick,red,double] (45:3) -- (0:3);
\draw[->,thick,red,double] (30:3) arc (30:15:3);
\draw[->,thick,red,double] (310:3) arc (310:295:3);
\draw[->,thick,red,double] (280:3) arc (280:265:3);
\draw[->,thick,red,double] (220:3) arc (220:205:3);
\draw[->,thick,red,double] (185:3) arc (185:170:3);
\draw[->,thick,red,double] (105:3) arc (105:90:3);
\draw[->,thick,red,double] (220:3)-- (325:3);

\draw[->,thick,] (90:3) arc (90:75:3);
\draw[->,thick,] (75:3) arc (75:60:3);
\draw[->,thick,] (45:3) arc (45:30:3);
\draw[->,thick,] (15:3) arc (15:0:3);
\draw[->,thick,] (325:3) arc (325:310:3);
\draw[->,thick,] (295:3) arc (295:280:3);
\draw[->,thick,] (235:3) arc (235:220:3);
\draw[->,thick,] (170:3) arc (170:155:3);
\draw[->,thick,] (170:3) -- (45:3);
\draw[->,thick] (90:3)--(280:3);

\draw [fill] (90:3) circle (0.05);
\draw [fill] (75:3) circle (0.05);
\draw [fill] (60:3) circle (0.05);
\draw [fill] (45:3) circle (0.05);
\draw [fill] (30:3) circle (0.05);
\draw [fill] (15:3) circle (0.05);
\draw [fill] (0:3) circle (0.05);
\draw [fill] (325:3) circle (0.05);
\draw [fill] (310:3) circle (0.05);
\draw [fill] (295:3) circle (0.05);
\draw [fill] (280:3) circle (0.05);
\draw [fill] (265:3) circle (0.05);
\draw [fill] (220:3) circle (0.05);
\draw [fill] (235:3) circle (0.05);
\draw [fill] (205:3) circle (0.05);
\draw [fill] (185:3) circle (0.05);
\draw [fill] (170:3) circle (0.05);
\draw [fill] (155:3) circle (0.05);
\draw [fill] (105:3) circle (0.05);

\node  [above] at (90:3) {$x_{0}$};
\node  [above] at (75:3) {$x_{1}$};
\node  [above] at (60:3) {$x_{2}$};
\node  [right] at (325:3) {$x_{j}$};
\node  [below] at (280:3) {$x_{p}$};
\node  [left] at (220:3) {$x_{t}$};

\end{tikzpicture}
}
\newcommand{\sectionThreeTwo}
{
\begin{tikzpicture}[>=stealth']
\draw [thick,dotted](0,0) circle (3);

\draw[->,thick,red,double] (60:3) arc (60:45:3);
\draw[->,thick,red,double] (45:3) -- (0:3);
\draw[->,thick,red,double] (30:3) arc (30:15:3);
\draw[->,thick,red,double] (325:3) arc (325:310:3);
\draw[->,thick,red,double] (295:3) arc (295:280:3);
\draw[->,thick,red,double] (280:3) -- (230:3);

\draw[->,thick,red,double] (230:3) arc (230:215:3);
\draw[->,thick,red,double] (195:3) arc (195:170:3);
\draw[->,thick,red,double] (155:3) arc (155:140:3);
\draw[->,thick,red,double] (105:3) arc (105:90:3);
\draw[->,double,red,double] (170:3)-- (105:3);

\draw[->,thick,] (90:3) arc (90:75:3);
\draw[->,thick,] (75:3) arc (75:60:3);
\draw[->,thick,] (45:3) arc (45:30:3);
\draw[->,thick,] (15:3) arc (15:0:3);
\draw[->,thick,] (310:3) arc (310:295:3);
\draw[->,thick,] (280:3) arc (280:265:3);
\draw[->,thick,] (245:3) arc (245:230:3);
\draw[->,thick,] (170:3) arc (170:155:3);
\draw[->,thick] (90:3) -- (280:3);
\draw[->,thick] (140:3)--(45:3);
\draw[->,thick] (230:3)--(170:3);
\draw[->,thick] (105:3)--(325:3);

\draw [fill] (90:3) circle (0.05);
\draw [fill] (75:3) circle (0.05);
\draw [fill] (60:3) circle (0.05);
\draw [fill] (45:3) circle (0.05);
\draw [fill] (30:3) circle (0.05);
\draw [fill] (15:3) circle (0.05);
\draw [fill] (0:3) circle (0.05);
\draw [fill] (325:3) circle (0.05);
\draw [fill] (310:3) circle (0.05);
\draw [fill] (295:3) circle (0.05);
\draw [fill] (280:3) circle (0.05);
\draw [fill] (265:3) circle (0.05);

\draw [fill] (195:3) circle (0.05);
\draw [fill] (170:3) circle (0.05);
\draw [fill] (155:3) circle (0.05);
\draw [fill] (140:3) circle (0.05);
\draw [fill] (105:3) circle (0.05);

\node  [above] at (90:3) {$x_{0}$};
\node  [above] at (75:3) {$x_{1}$};
\node  [above] at (60:3) {$x_{2}$};
\node  [right] at (325:3) {$x_{p_j}$};
\node  [below] at (280:3) {$x_{p}$};
\node  [left] at (230:3) {$x_{p_1}$};
\node  [left] at (170:3) {$x_{p_2}$};
\node  [left] at (105:3) {$x_{p_{j-1}}$};

\end{tikzpicture}
}
\begin{document}
 
\title{Proper connection and proper-walk connection of digraphs}
\author{
  Anna Fiedorowicz$^1$, El\.zbieta Sidorowicz$^1$ and  \'Eric Sopena$^2$\\
  {\scriptsize $^1$ Institute of Mathematics, University of Zielona G\'ora, Zielona G\'ora, Poland}\\
  {\scriptsize e-mail: a.fiedorowicz@wmie.uz.zgora.pl; e.sidorowicz@wmie.uz.zgora.pl}\\
  {\scriptsize $^2$ Univ. Bordeaux, CNRS, Bordeaux INP, LaBRI, UMR5800, F-33400 Talence, France}\\
  {\scriptsize e-mail: eric.sopena@labri.fr}
   }


\maketitle
\begin{abstract}
An arc-colored digraph $D$  is properly  (properly-walk) connected if, for any ordered pair of vertices $(u,v)$, the digraph $D$ contains a directed path (a directed walk) from $u$ to $v$ such that arcs adjacent on that path (on that walk) have distinct colors. 
The  proper connection number $\pc(D)$ (the proper-walk connection number $\wc(D)$) of a digraph $D$ is the minimum number of colours to make $D$ properly connected (properly-walk  connected). 
We prove that $\pc(C_n(S))\le 2$ for every circulant digraph $C_n(S)$ with $S\subseteq\{1,\ldots ,n-1\}, |S|\ge 2$ and $1\in S$. 
Furthermore, we give some sufficient conditions for a Hamiltonian digraph $D$
to satisfy $\pc(D) = \wc(D) = 2$.
\end{abstract}

\noindent{\bf Keywords:} proper connection; digraph; arc colouring  \\

\section{Introduction}\label{sec:introduction}

In an edge-coloured graph $G$, a path $P$ is  {\em rainbow} if no two edges in $P$ have  the same colour. 
An edge-coloured graph $G$ is {\it rainbow connected} if every two vertices of $G$ are connected by a rainbow path, and its colouring is said to be a {\it rainbow connected colouring}. 
The {\it rainbow connection number} of $G$ is the smallest possible number of colours in a rainbow connected colouring of $G$. 
The  rainbow connection number of graphs was introduced by Chartrand, Johns, McKeon and Zhang in~\cite{CJMZ08}. 

A weaker version of the rainbow connection number  --the proper connection number-- was introduced by Borozan {\it et al. }in \cite{BoFu12}. 
An edge-coloured graph is said to be {\it properly coloured} if no two adjacent edges share the same  colour. 
A connected edge-coloured graph $G$ is {\it properly connected} if there exists a properly coloured path between every two vertices in $G$. 
The {\it proper connection number} of a connected graph $G$  is the minimum number of colours needed to colour the edges of $G$ to make it properly connected.  

The concepts of rainbow connected graphs and  properly connected graphs have attracted much attention during the last decade. For more details, the reader can refer to surveys~\cite{LS,lishsu-13} (for rainbow connection)  and~\cite{LiMa15,LiMaQi18} (for proper connection).
Melville and  Goddard considered in~\cite{MeGo16,MeGo18} the analogous concept for walks and trails. For a connected graph $G$,  the {\it proper-walk (proper-trail) connection number} is the minimum number of colours that one needs in order to get a properly coloured walk (trail) between every two vertices in $G$.

The rainbow connection, the proper connection and the proper-walk  connection numbers of graphs readily extend to digraphs, using arc-colourings instead of edge-colourings and  directed paths (directed walks, respectively) instead of the paths (walks, respectively). 
The study of rainbow connections of digraphs was initiated by Dorbec {\it et al.} in \cite{DoSc14}.
Then the  rainbow connection number of some digraph classes was determined and different notions similar to the rainbow
connection were introduced, such as the strong rainbow connection, the  rainbow vertex connection and the rainbow total connection 
(see \cite{almo15-1,LeLiLi18,LeLiMa18,SiSo18_1,SiSo18_2}).

The directed version of the proper connection was introduced by Magnant {\it et al.} in \cite{MaMo16}, and the directed version of the proper-walk connection by Melville and  Goddard  in \cite{MeGo16}. 
In \cite{MaMo16} and in \cite{MaNie19} the strong and the vertex version of directed proper connection were considered.  
In this paper we study the proper connection and the proper-walk  connection  of digraphs.

\medskip

An arc-coloured directed path (directed walk) is {\it properly coloured} if  it does not contain two adjacent arcs with the same colour. 
An  arc-coloured  digraph $D$ is {\it properly connected} if, between every ordered pair of vertices, there is a directed properly coloured path. 
In that case, we say that the corresponding arc-colouring is a properly connected arc-colouring of $D$.
The {\it  proper connection number} of  $D$, denoted by $\pc(D)$, is the minimum number of colours needed to colour the arcs of $D$ so that $D$ is properly connected. 

An arc-coloured  digraph $D$ is {\it properly-walk connected} if, between every ordered pair of vertices, there is a  properly coloured directed walk. 
Again, we say that the corresponding arc-colouring is a properly-walk connected arc-colouring of $D$.
Clearly, every properly connected digraph is also properly-walk connected. 
The {\it  proper-walk connection number} of  $D$, denoted by $\wc(D)$, is the minimum number of colours needed to colour the arcs of $D$ so that $D$ is properly-walk connected. 
Note that in order to admit an arc-colouring which makes it properly (properly-walk) connected, a digraph must be strongly connected.

Magnant {\it et al.}~\cite{MaMo16} proved that the  proper connection number of every strong digraph is at most 3. This result suggests the problem of characterizing the digraphs whose proper connection number is at most 2. 
Ducoff {\it et al.}~\cite{DuMa19} proved that determining whether $\pc(D)\le 2$ is NP-complete for any given digraph $D$. 
Gu {\it et al.}~\cite{GuDeLi19} considered the  proper connection number of random digraphs. They proved that if the probability $p$ is at least $(\log n+\log \log n+\lambda(n))/n$, then  the random digraph $D(n,p)$ satisfies $\pc (D(n,p))\le 2$ with high probability. 

The following observation is important for our study.
Given two  digraphs $D_1$ and $D_2$ such that $D_1$ is a spanning subdigraph of $D_2$, if $D_1$ is properly connected under some arc-colouring then, by using that arc-colouring for the arcs of $D_2$, we obtain a partial arc-colouring which makes $D_2$ properly connected. 
Since every even cycle has an arc-colouring with two colours that makes it properly connected,  every Hamiltonian digraph of even order has proper connection number at most~2. 
We will thus focus on Hamiltonian digraphs of odd order. 

The motivation for this paper was the theorem proved by Magnant {\it et al.} in~\cite{MaMo16} which states that the  proper connection number of a strong tournament of order at least~4 is~2. 
On one other hand, the  proper connection number of any odd directed cycle is~3. 
We can thus try to determine the maximum number of arcs we can remove from an odd tournament while keeping  its  proper connection number equal to~2. 
Since a digraph $D$ must be strongly connected  to be properly connected, we must assume that,  when removing arcs from $D$, the resultant digraph is still  strongly connected. 
In this paper  we will assume that the resulting digraph has a directed cycle going through all its vertices, i.e., that the resultant digraph is Hamiltonian.  

\medskip

Our paper is organised as follows.
We introduce definitions and notation in Section~\ref{sec:definitions} and give some preliminary results in Section~\ref{sec:preliminaries}.
In Section~\ref{sec:path}, we prove that if $D$ is a Hamiltonian digraph such that either (i) for every vertex $v$  there is an even chord  with a tail in $v$, or (ii) for every vertex $v$ there is an even chord  with a head in $v$, then the  proper connection number of $D$ is at most~2. 
We also prove that $\pc(C_n(S))\le 2$ for every circulant digraph $C_n(S)$ with $S\subseteq\{1,\ldots ,n-1\},|S|\ge 2$ and $1\in S$. 
This result implies a theorem proved in~\cite{MaNie19}, which states that $\pc(C_n([k])=2$ whenever $k\neq n-1$ and $k\neq 1$. 
In Section~\ref{sec:walk}, we give some sufficient conditions for a Hamiltonian digraph to have  proper-walk connection number  equal to~2. 
We conclude the paper with some open questions in Section~\ref{sec:remarks}.


\section{Definitions and notation}\label{sec:definitions}

All digraphs in this paper are simple in the following sense: they are loopless, they do not contain parallel arcs, but opposite arcs are allowed.
For a given digraph $D$, we denote by $V(D)$ and $A(D)$  its set of vertices and its set of arcs, respectively.
Two arcs $xy$ and $zt$ in $D$ are said to be  {\em consecutive} if $y=z$.
Given an arc $xy$ in $D$, we say that $y$ is an {\em out-neighbour} of $x$, while $x$ is an {\em in-neighbour} of $y$. 
Moreover, $x$ is the {\em tail} of $xy$ and $y$ the {\em head} of $xy$.
The {\em out-degree} $d_D^+(x)$ of $x$ in $D$ is the number of arcs with the tail in $x$ and  the {\em in-degree} $d_D^-(x)$ of $x$ in $D$ is the number of arcs with the head in $x$. The {\em degree} $d_D(x)$ of $x$ in $D$ is the number of arcs incident with $x$, $d_D(x)=d_D^+(x)+d_D^-(x)$.

Let $D$ be a digraph. For an arc $xy$ in $A(D)$, we denote by $D-xy$ the digraph $D-xy=(V(D),A(D)\setminus\{xy\})$.
For a vertex $u$ in $V(D)$, we denote by $D-u$ the digraph $D-u=(V(D)\setminus\{u\},(A(D)\setminus(\{u\}\times V(D))\setminus(V(D)\times\{u\}))$.
For a digraph $D'$, we denote by $D\cup D'$ the digraph 
$D\cup D'=(V(D)\cup V(D'),A(D)\cup A(D'))$.

For a digraph $D$ and a set of vertices $S\subseteq V(D)$, we denote by $D[S]$ the subdigraph of $D$ induced by $S$, that is, $D[S]=(S,A(D)\cap (S\times S))$.

A {\it walk} of length $k\ge 1$ in a digraph $D$ is a sequence $x_0\dots x_k$ of vertices 
such that $x_ix_{i+1}\in A(D)$ for every $i$, $0\le i\le k-1$. 
A {\it path}  is a walk in which  no vertex appears twice. 
Such a path (a walk), going from $x_0$ to $x_k$, is referred to as an {\it $x_0x_k$-path} (an {\it $x_0x_k$-walk}). Let $P=x_0\dots x_k$ be an $x_0x_k$-path and $xx_0\in A(D)$ ($x_kx'\in A(D)$). By $xx_0P$ ($Px_kx'$) we denote the path $xx_0\dots x_k$ ($x_0\dots x_kx'$).

An {\it ear} in a digraph $D$ is an $xy$-path $Q$ such that
$d_D(x)>2$, $d_D(y)>2$ and $d_D(z)=2$ for every internal vertex $z$ of $Q$.

A digraph $D$ is {\it strongly connected} ({\it strong}, for short) if, for every ordered pair of vertices $(u,v)$, there exists a  $uv$-path in $D$.

A {\it cycle} of length $k\ge 1$ in a digraph $D$ is a sequence $x_0\dots x_kx_0$ of vertices such that $x_ix_{i+1}\in A(D)$ for every $i$, $0\le i\le k$  (subscripts are taken modulo $k$), and $x_i\neq x_j$ for every $i,j$, $0\le i<j\le k$.  
Since cycles are denoted similarly all along the paper, 
it is taken for granted that subscripts are always taken modulo the length of the cycle. 

For a path $P$, we denote by $|P|$ the length of $P$, i.e., $|P|=|A(P)|$. 
For a cycle $C$, we denote by $C[x_i,x_j]$ the $x_ix_j$-path contained in $C$ (that is, $C[x_i,x_j]=x_ix_{i+1}\dots x_j$).

The {\em distance} from a vertex $x$ to a vertex $y$ in a digraph $D$,
denoted by $\dist_D(x,y)$, is the length of a shortest $xy$-path in $D$
(if there is no such path, we let $\dist_D(x,y)=\infty$).

Let $C=x_0\dots x_{n-1}x_0$ be a cycle of length $n$.
A {\em chord} of $C$ is an arc $x_px_q$, $0\le p,q\le n-1$, with $x_q\neq x_{p+1}$. 
The {\it length} of the chord $x_px_q$ is $|C[x_p,x_q]|$. 
The chord $x_px_q$ is {\it even}  if $|C[x_p,x_q]|$ is even, otherwise, the chord $x_px_q$ is {\it odd}.

Let $D$ be a Hamiltonian digraph, $C$ be a Hamiltonian cycle of $D$, and $u,v$ be two adjacent vertices of $D$. 
If $uv$ is an arc of $C$ then we write $u\ri v$; if $uv$ is a chord of $C$, then we write $u\Ri v$. 
For a path $x_0x_1\dots x_{k-1}x_k$, we write $x_0\Rri x_1\Rri \dots \Rri x_{k-1}\Rri x_k$, where $\Rri$ stands either for $\ri$ or $\Ri$, depending on the type of the corresponding arc.

Given a graph $G=(V,E)$, the {\it biorientation} of $G$ is the symmetric digraph $\overleftrightarrow{G}$ obtained from $G$ by replacing each edge $uv$ of $G$ by the pair of symmetric arcs $uv$ and $vu$.

For an integer $n\ge 3$ and a set $S\subseteq \{1,2,\ldots, n-1\}$, the {\it circulant digraph} $C_n(S)$ is the digraph with vertex set $V(C_n(S))=\{v_0, v_1, \ldots , v_{n-1}\}$ and arc set $A(C_n(S))=\{v_iv_j : j-i\equiv s \pmod n, s\in S\}$.

\section{Preliminaries}\label{sec:preliminaries}

For every digraph $D$, we denote by $D^{-1}$ the \emph{reversed digraph of $D$}, that is, $V(D^{-1})=V(D)$ and $xy\in A(D^{-1})$ if and only if $yx\in A(D)$.
It directly follows from the definitions that if an arc-colouring $\lambda$ makes $D$ properly connected (properly-walk connected), then the colouring $\lambda'$ defined by $\lambda'(xy)=\lambda(yx)$ for every arc $xy\in A(D^{-1})$ makes $D^{-1}$ properly connected (properly-walk connected). Hence, we have the following observation.

\begin{obs}\label{obs:reversed-digraph}
For every digraph $D$, $\pc(D^{-1})=\pc(D)$ and $\wc(D^{-1})=\wc(D)$.
\end{obs}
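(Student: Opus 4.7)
The plan is to make precise the argument sketched in the paragraph preceding the statement, namely that reversing a digraph together with its optimal arc-colouring yields an equally good colouring of the reversed digraph.

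First I would fix an optimal properly connected arc-colouring $\lambda$ of $D$ using $k=\pc(D)$ colours, and define a colouring $\lambda'$ of $D^{-1}$ by $\lambda'(xy)=\lambda(yx)$ for every $xy\in A(D^{-1})$; this is well defined since, by definition of $D^{-1}$, $xy\in A(D^{-1})$ iff $yx\in A(D)$. Next, given any ordered pair $(u,v)$ of vertices of $D^{-1}$, I would use proper connectedness of $D$ to obtain a properly coloured $vu$-path $P = v x_1 x_2 \dots x_{\ell-1} u$ in $D$, and consider its reverse $P' = u x_{\ell-1} \dots x_1 v$, which is an $uv$-path in $D^{-1}$ (each arc $x_{i+1}x_i$ of $P'$ lies in $A(D^{-1})$ because the corresponding arc $x_ix_{i+1}$ lies in $A(P)\subseteq A(D)$).

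The key verification is that $P'$ is properly coloured under $\lambda'$. Two consecutive arcs of $P'$ have the form $x_{i+1}x_i$ and $x_ix_{i-1}$; by definition of $\lambda'$, their colours are $\lambda(x_ix_{i+1})$ and $\lambda(x_{i-1}x_i)$, which are exactly the colours of the consecutive arcs $x_{i-1}x_i$ and $x_ix_{i+1}$ of $P$ in $D$, hence distinct since $P$ is properly coloured. Thus $\lambda'$ is a properly connected colouring of $D^{-1}$ using $k$ colours, so $\pc(D^{-1})\le\pc(D)$. Applying the same argument to $D^{-1}$, using $(D^{-1})^{-1}=D$, yields the reverse inequality, and hence $\pc(D^{-1})=\pc(D)$.

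For the equality $\wc(D^{-1})=\wc(D)$, the same construction works verbatim: reversing a properly coloured $vu$-walk in $D$ gives a walk in $D^{-1}$ (vertex repetition is irrelevant), and the check that consecutive arcs receive distinct colours is identical to the path case. The main point to keep straight is just the indexing when reversing the sequence of arcs, which is the only step that requires any care; there is no real obstacle here.
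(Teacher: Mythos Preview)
Your argument is correct and follows exactly the approach the paper sketches in the sentence preceding the observation: define $\lambda'(xy)=\lambda(yx)$, reverse a properly coloured $vu$-path (or walk) in $D$ to obtain a properly coloured $uv$-path (or walk) in $D^{-1}$, and conclude by applying the inequality to $D^{-1}$ using $(D^{-1})^{-1}=D$. You have simply spelled out the details the paper leaves implicit.
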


Magnant {\it et al.}  \cite{MaMo16} proved the following theorem.

\begin{theorem}{\rm \cite{MaMo16}}
\label{thm:strong digraph}
If $D$ is a strong digraph, then $\pc(D)\le 3$.
\end{theorem}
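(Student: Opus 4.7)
The plan is to prove the bound $\pc(D) \le 3$ by induction using an ear decomposition. First I would record the following monotonicity: if $D' \subseteq D$ is a strong spanning subdigraph and $\lambda'$ is a properly connected arc-colouring of $D'$, then extending $\lambda'$ arbitrarily to $A(D) \setminus A(D')$ yields a properly connected arc-colouring of $D$, since every properly coloured $uv$-path in $D'$ is still a properly coloured $uv$-path in $D$. It therefore suffices to 3-colour a minimal strong spanning subdigraph $D^*$ of $D$. By the classical ear-decomposition theorem for strong digraphs, $D^*$ can be written as $D_0 \subsetneq D_1 \subsetneq \cdots \subsetneq D_k = D^*$, where $D_0$ is a directed cycle and each $D_i$ is obtained from $D_{i-1}$ by attaching an ear $P_i$: a directed path (or closed walk) internally disjoint from $D_{i-1}$ whose endpoints lie in $D_{i-1}$.

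For the base case $D^* = D_0 = x_0 x_1 \dots x_{n-1} x_0$, I would set $\lambda(x_j x_{j+1}) = 1 + (j \bmod 2)$ for $0 \le j \le n-2$, and $\lambda(x_{n-1} x_0) = 2$ if $n$ is even, $\lambda(x_{n-1} x_0) = 3$ if $n$ is odd. Since every ordered pair of vertices on a directed cycle is joined by a unique directed subpath, one only has to check that this subpath has no two consecutive arcs of the same colour: in the odd case the (single) arc of colour $3$ is flanked by arcs of colours $2$ and $1$, so no conflict arises, and 3 colours suffice.

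For the inductive step I would assume $D_{i-1}$ has a properly connected colouring $\lambda$ with colours in $\{1,2,3\}$ and extend it to the ear $P_i = u x_1 x_2 \cdots x_{\ell-1} v$. If $\ell = 1$ there are no new internal vertices, so $\lambda(uv)$ may be chosen arbitrarily. If $\ell \ge 2$, I would colour the internal arcs of $P_i$ by alternating two of the three colours, and choose the colours of the end-arcs $u x_1$ and $x_{\ell-1} v$ to match up with existing properly coloured paths in $D_{i-1}$. Proper paths among vertices of $D_{i-1}$ are inherited from $\lambda$; paths from $y \in V(D_{i-1})$ to an internal $x_j$ arise from concatenating a properly coloured $yu$-path in $D_{i-1}$ with the prefix $u x_1 \cdots x_j$ of $P_i$; symmetrically, paths from $x_j$ to $y$ come from the suffix $x_j \cdots x_{\ell-1} v$ followed by a properly coloured $vy$-path.

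The main obstacle is the simultaneous matching of colours at the two junctions $u$ and $v$: for each $y \in V(D_{i-1})$, the final-arc colours realisable by properly coloured $yu$-paths (and initial-arc colours of $vy$-paths) may be restricted, so the choices of $\lambda(ux_1)$ and $\lambda(x_{\ell-1} v)$ must avoid all these forbidden colours at once. Here the availability of a \emph{third} colour is crucial: it serves as a ``buffer'' that lets one always find an admissible colour for each end-arc, possibly after a local swap on one or two arcs of $D_{i-1}$ incident with $u$ or $v$ (such a swap is available because $u$ and $v$ have positive in- and out-degrees in the strong digraph $D_{i-1}$). Carrying out this case analysis cleanly --- which is where the bound $3$, rather than $2$, is genuinely needed --- is the technical heart of the argument; the rest of the proof is a straightforward induction.
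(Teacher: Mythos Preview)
The paper does not prove this theorem; it is quoted from \cite{MaMo16} and used as background. So there is no in-paper proof to compare against, but one can compare your outline to the original argument of Magnant \emph{et al.}

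Your overall plan (reduce to a minimal strong spanning subdigraph and induct along an ear decomposition) is exactly the route taken in \cite{MaMo16}. The base case is fine. The genuine gap is precisely where you flag ``the technical heart'': the bare hypothesis ``$D_{i-1}$ admits a properly connected $3$-colouring'' is too weak to carry the induction. For a fixed colour of $ux_1$, you need, for \emph{every} $y\in V(D_{i-1})$, some properly coloured $yu$-path whose last arc avoids that colour. With only one guaranteed $yu$-path per $y$, different $y$'s may force all three colours to appear as last-arc colours into $u$, leaving no admissible choice for $\lambda(ux_1)$; the same obstruction occurs at $v$. Your proposed escape hatch, ``a local swap on one or two arcs of $D_{i-1}$ incident with $u$ or $v$'', is not safe: recolouring an arc of $D_{i-1}$ can destroy proper connectivity between other pairs that relied on that arc.

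What makes the induction close in \cite{MaMo16} is a strengthened hypothesis (often called the \emph{strong property}): the $3$-colouring of $D_{i-1}$ is such that for every ordered pair $(x,y)$ there exist two properly coloured $xy$-paths whose first arcs carry different colours and whose last arcs carry different colours. Under this hypothesis, for any choice of $\lambda(ux_1)$ each $y$ has a properly coloured $yu$-path ending in a different colour, and symmetrically at $v$; moreover one can colour the ear so that the strong property is preserved for all pairs involving the new internal vertices (this is where the third colour is genuinely needed, e.g.\ to get two differently-starting paths out of each internal $x_j$). You should replace the plain proper-connectedness hypothesis by this two-paths hypothesis and verify its maintenance through the ear attachment; once that is done, the rest of your outline goes through.
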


The only digraphs with  proper connection number equal to~1 are biorientations of  complete graphs $\overleftrightarrow{K_n}$.  
Since it is NP-complete to decide whether a strong digraph has proper connection number at most~2 \cite{DuMa19}, it seems to be interesting to find some sufficient conditions for a digraph to have this property. 

One can see that every bipartite strong digraph, except $\overleftrightarrow{K_2}$,  has   proper connection number equal to~2. Indeed, let $D=(X\cup Y, A)$ be a strong bipartite digraph. 
If we colour   all arcs with tail in $X$ with colour~1 and  all arcs with tail in $Y$ with colour~2, then we clearly obtain a properly connected digraph. 

\begin{prop}
If $D$ is a strong bipartite digraph, $D\neq \overleftrightarrow{K_2}$, 
then $\pc(D) = 2$.
\end{prop}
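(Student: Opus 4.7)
The plan is to use the bipartition to induce a 2-arc-colouring and verify it works directly.

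First, I would establish the lower bound $\pc(D)\ge 2$. If $\pc(D)=1$, then every ordered pair $(u,v)$ of distinct vertices must be joined by a properly coloured directed path; with only one colour, a properly coloured directed path has length at most $1$, so every ordered pair must be joined by an arc, forcing $D=\overleftrightarrow{K_n}$ for some $n\ge 2$. Such a biorientation is bipartite only if $n=2$, and $\overleftrightarrow{K_2}$ is excluded by hypothesis. Hence $\pc(D)\ge 2$.

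For the upper bound $\pc(D)\le 2$, I would define the arc-colouring $\lambda:A(D)\to\{1,2\}$ by $\lambda(xy)=1$ if $x\in X$ and $\lambda(xy)=2$ if $x\in Y$. The crucial observation is that in a bipartite digraph with parts $X$ and $Y$, the tails of the arcs along any directed walk alternate between $X$ and $Y$: if an arc $uv$ has its tail $u$ in $X$, then $v\in Y$, so the next arc $vw$ (if it exists) has its tail in $Y$, and so on. Consequently, two consecutive arcs of any directed path have tails in different parts, and therefore receive different colours under $\lambda$.

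Thus every directed path in $D$ is automatically properly coloured. Since $D$ is strongly connected, for every ordered pair $(u,v)$ of distinct vertices there exists a directed $uv$-path in $D$, and by the above it is properly coloured. This proves that $\lambda$ is a properly connected arc-colouring of $D$ using $2$ colours, so $\pc(D)\le 2$, completing the proof.

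There is no real obstacle here: once one notices that the bipartite structure forces the tails along any directed walk to alternate between $X$ and $Y$, the 2-colouring by tail-side does all the work, and the lower bound is a one-line consequence of the characterization of digraphs with proper connection number $1$.
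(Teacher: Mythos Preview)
Your proof is correct and follows exactly the same idea as the paper: colour each arc by the part containing its tail, so that consecutive arcs along any directed path alternate colours. Your write-up is in fact more complete than the paper's, since you spell out both the lower bound (via the characterization of digraphs with $\pc=1$) and the alternation argument, whereas the paper treats both as evident.
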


If $D$ has a spanning subdigraph $H$ with $\pc(H)\le 2$, then $\pc(D)\le 2$.  We thus obtain the following corollary.

\begin{corollary}
\label{cor:bipartite}
If a digraph $D$ has a strong bipartite spanning subdigraph, then $\pc(D)\le 2$.
\end{corollary}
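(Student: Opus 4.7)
The statement is essentially an immediate consequence of the preceding Proposition combined with the general monotonicity observation made earlier in the paper (namely, that a properly connected arc-colouring of a spanning subdigraph extends to a properly connected arc-colouring of the whole digraph by colouring the remaining arcs arbitrarily). So my plan is to make that argument precise and handle the single degenerate case.

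First, let $H$ be a strong bipartite spanning subdigraph of $D$. I would split into two cases depending on whether $H=\overleftrightarrow{K_2}$ or not. If $H\neq\overleftrightarrow{K_2}$, the Proposition immediately yields a $2$-arc-colouring $\lambda\colon A(H)\to\{1,2\}$ making $H$ properly connected. I then extend $\lambda$ to an arc-colouring $\lambda'$ of $D$ by assigning, say, colour $1$ to every arc in $A(D)\setminus A(H)$. For any ordered pair $(u,v)$ of vertices in $D$, the properly coloured $uv$-path in $H$ guaranteed by $\lambda$ is still a properly coloured $uv$-path in $D$ under $\lambda'$ (since all of its arcs are in $H$ and keep the same colours). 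Hence $D$ is properly connected with two colours.

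The remaining case $H=\overleftrightarrow{K_2}$ forces $V(D)=V(H)$ to have only two vertices; since $D$ is simple, the only possibility is $D=\overleftrightarrow{K_2}$ itself, for which $\pc(D)=1\le 2$. This exhausts all cases.

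There is no real obstacle here: the only mildly delicate point is recognising that the exception $\overleftrightarrow{K_2}$ appearing in the Proposition is harmless because it forces $D$ to equal $\overleftrightarrow{K_2}$, whose proper connection number is already at most $2$. Everything else is a direct application of the spanning-subdigraph observation.
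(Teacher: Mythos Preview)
Your proof is correct and follows exactly the paper's approach: apply the preceding Proposition to the strong bipartite spanning subdigraph and invoke the monotonicity observation that a properly connected arc-colouring of a spanning subdigraph extends to the whole digraph. Your explicit treatment of the $\overleftrightarrow{K_2}$ exception is a minor nicety the paper leaves implicit.
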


In the next theorem we give another family of digraphs having  proper connection number at most~2. 
Consequently, every digraph which contains a spanning digraph belonging to this family also has  proper connection number at most~2.

\begin{theorem}
\label{thm:almost_bipartite}
Let $D$ be a strong digraph. If  there is a partition $V(D)=V_1\cup V_2$,
with $V_1\cap V_2=\emptyset$, such that $D[V_1]$ is  strong bipartite  and $V_2$ is  an independent set of vertices, then $\pc(D)\le 2$. 
\end{theorem}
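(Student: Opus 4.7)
The plan is to extend the tail-based two-colouring used in the Proposition from $D[V_1]$ to all of $D$ by assigning each arc incident to $V_2$ the colour that its $V_1$-endpoint ``would give it'' under the bipartite rule. Writing $V_1 = X \cup Y$ for the bipartition of $D[V_1]$, I would define a colouring $\lambda$ by $\lambda(uv) = 1$ if $u \in X$, $\lambda(uv) = 2$ if $u \in Y$, $\lambda(uv) = 2$ if $u \in V_2$ and $v \in X$, and $\lambda(uv) = 1$ if $u \in V_2$ and $v \in Y$. Since $V_2$ is independent these four cases cover every arc of $D$. A direct check gives the key property: at every vertex of $V_1$, all incoming arcs (from $V_1$ or from $V_2$) share one colour and all outgoing arcs share the opposite colour. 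Consequently, any directed path in $D$ whose \emph{internal} vertices all lie in $V_1$ is automatically properly coloured.

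Once this observation is made, the proof reduces to producing, for each ordered pair $(u,w)$ of distinct vertices, a $uw$-path whose internal vertices lie in $V_1$. For $u, w \in V_1$ any $uw$-path in the strong digraph $D[V_1]$ works. Since $V_2$ is independent and $D$ is strong, every $v \in V_2$ has at least one in-neighbour and one out-neighbour, both in $V_1$. For $u \in V_2$ and $w \in V_1$, pick an out-neighbour $u' \in V_1$ of $u$ and take either the single arc $u \to w$ (if $u' = w$) or the concatenation of $u \to u'$ with a $u'w$-path in $D[V_1]$. The case $u \in V_1$, $w \in V_2$ is symmetric. For $u, w \in V_2$, pick an out-neighbour $u' \in V_1$ of $u$ and an in-neighbour $w' \in V_1$ of $w$, and concatenate $u \to u'$, a $u'w'$-path in $D[V_1]$ (if $u' \neq w'$), and $w' \to w$; in the degenerate subcase $u' = w'$ the construction collapses to the two-arc path $u \to u' \to w$, which is still properly coloured by the in/out property at $u'$.

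The only real verification is the in-colour/out-colour dichotomy at each $V_1$-vertex, and this is a routine case check on the definition of $\lambda$: at each $x \in X$ every in-arc is coloured $2$ and every out-arc $1$, and symmetrically at each $y \in Y$. I do not anticipate a genuine obstacle: the construction of $\lambda$ is designed precisely so that the bipartite convention is preserved across the boundary with $V_2$, and the remainder of the argument is bookkeeping. Degenerate situations such as $D[V_1] = \overleftrightarrow{K_2}$, or cases where $u'$ coincides with $w$ (or $w'$ with $u$, or $u' = w'$), reduce immediately to inspecting a path of one or two arcs and are handled by the same property.
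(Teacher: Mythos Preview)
Your proposal is correct and follows essentially the same approach as the paper: the colouring $\lambda$ you define coincides with the paper's colouring (with $X=V'_1$, $Y=V''_1$), and the construction of the required paths via out-/in-neighbours in $V_1$ is the same. Your formulation of the key property as an ``in-colour/out-colour dichotomy'' at each $V_1$-vertex is a slightly cleaner way to package the verification than the paper's separate observations about first/last colours, but the underlying argument is identical.
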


\begin{proof}
We first consider the subdigraph $D[V_1]$. 
Let $V_1=V'_1\cup V''_1$ be the partition of $V_1$ into two independent sets. 
We then colour the arcs with tail in $V'_1$ with~1, and the arcs with tail in $V''_1$   with~2. Observe that every path linking two vertices in $D[V_1]$ is properly coloured
and, moreover, the first colour of every path starting from $V'_1$ (resp. from $V''_1$) is~1 (resp. 2) and the last colour of every path ending in $V'_1$ (resp. in $V''_1$) is~2 (resp. 1).

We now colour the remaining arcs of $D$ as follows. 
Let $xy$ be an arc with $x\in V_1$ and $y\in V_2$. We then colour $xy$ with colour~1 if $x\in V'_1$ and with colour~2 if $x\in V''_1$.
On the other hand, if $xy$ is an arc with $x\in V_2$ and $y\in V_1$, then we colour $xy$ with colour~1 if $y\in V''_1$ and with colour~2 if $y\in V'_1$.
Observe that every directed path of length~2 starting from $V_2$ or ending in $V_2$ is properly coloured.

We now claim that the so-constructed arc-colouring makes $D$ properly connected.
Let $(x,y)$ be any ordered pair of distinct vertices in $D$. Let $x'$ be any out-neighbour of $x$ in $V_1$ if $x\in V_2$ (such a vertex necessarily exists since $D$ is strong), and $x'=x$ otherwise.
Similarly, let $y'$ be any in-neighbour of $y$ in $V_1$ if $y\in V_2$ (again, such a vertex necessarily exists since $D$ is strong), and $y'=y$ otherwise.
From the above observations, we get that for any path $P$ in $D[V_1]$ from $x'$ to $y'$ (with possibly $x'=y'$, in which case $P$ is the empty path), the path $xPy$ is necessarily properly coloured.
Therefore, the so-constructed arc-colouring makes $D$ properly connected, and thus $\pc(D)\le 2$. 
\end{proof}

\begin{corollary}
\label{cor:almost_bipartite}
If a digraph $D$ has a spanning subdigraph satisfying the conditions of Theorem~\ref{thm:almost_bipartite}, then $\pc(D)\le 2$.
\end{corollary}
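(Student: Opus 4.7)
The plan is to directly invoke the monotonicity principle for proper connection with respect to spanning subdigraphs, which was already noted earlier in the paper.

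First I would let $H$ be a spanning subdigraph of $D$ satisfying the hypotheses of Theorem~\ref{thm:almost_bipartite}; in particular $V(H)=V(D)$, $H$ is strong, and $V(H)$ admits a partition into a set $V_1$ inducing a strong bipartite subdigraph and an independent set $V_2$. By Theorem~\ref{thm:almost_bipartite}, there exists an arc-colouring $\lambda_H\colon A(H)\to\{1,2\}$ that makes $H$ properly connected.

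Next I would extend $\lambda_H$ to an arc-colouring $\lambda$ of the whole digraph $D$ by setting $\lambda(xy)=\lambda_H(xy)$ for every $xy\in A(H)$ and assigning any colour of $\{1,2\}$ (for instance, colour~$1$) to every arc of $A(D)\setminus A(H)$. Note that $D$ is indeed strong since it contains the strong spanning subdigraph $H$, so the question of proper connectedness of $D$ makes sense.

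It remains to verify that $\lambda$ makes $D$ properly connected. For any ordered pair $(u,v)$ of distinct vertices in $D$, the fact that $H$ is properly connected under $\lambda_H$ yields a properly coloured $uv$-path $P$ in $H$. Since every arc of $P$ belongs to $A(H)$, its colour under $\lambda$ agrees with its colour under $\lambda_H$, so $P$ is still a properly coloured $uv$-path in $D$ under $\lambda$. Hence $\lambda$ uses at most two colours and makes $D$ properly connected, which gives $\pc(D)\le 2$. There is no genuine obstacle in this argument; the only thing worth recording is that the colouring of the extra arcs is immaterial, because the witnessing paths can always be chosen inside $H$.
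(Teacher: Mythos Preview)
Your proof is correct and is precisely the argument the paper intends: the corollary is stated without proof immediately after Theorem~\ref{thm:almost_bipartite}, relying on the monotonicity principle already recorded in the introduction (that any properly connected arc-colouring of a spanning subdigraph $H$ extends, with arbitrary colours on the remaining arcs, to one of $D$). There is nothing to add.
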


\section{Proper connection of digraphs}\label{sec:path}

In this section we consider the proper connection number of Hamiltonian digraphs. Observe that if a Hamiltonian digraph $D$ has even order, then it has a spanning even cycle, which has  proper connection number~2, and it follows that $\pc(D)\le 2$. We thus focus on Hamiltonian digraphs of odd order. First we consider Hamiltonian digraphs having even chords.

\begin{theorem}
\label{thm:even_tail}
Let $D$ be a Hamiltonian digraph of odd order and $C$ be a Hamiltonian cycle of $D$. If every vertex of $D$ is the tail of an even chord of $C$, 
then $\pc(D)\le 2$.
\end{theorem}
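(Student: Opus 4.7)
The plan is to build a $2$-coloring $\lambda \colon A(D) \to \{1,2\}$ under which $D$ is properly connected. Because $n$ is odd, every $2$-coloring of the Hamiltonian cycle $C$ must have at least one ``conflict vertex'' $x_I$ where the two $C$-arcs meeting $x_I$ receive the same color. I will place this conflict at a vertex whose predecessor $x_{I-1}$ has a well-chosen even out-chord and use that chord to bridge the conflict.

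First I would color $C$ by $\lambda(x_{I-1}x_I)=\lambda(x_Ix_{I+1})=1$ and an alternating pattern elsewhere. Let $e=x_{I-1}x_k$ be the chosen even out-chord at $x_{I-1}$ (which exists by hypothesis) and set $\lambda(e)=1$. Using that $|C[x_{I-1},x_k]|$ is even, a short parity check shows that in both the wrap ($k<I-1$) and non-wrap ($k>I$) cases one has $\lambda(x_{I-2}x_{I-1})=2$ and $\lambda(x_kx_{k+1})=2$, so the three-arc path $x_{I-2}x_{I-1}x_kx_{k+1}$ has colors $2,1,2$. In other words, $e$ is a proper bridge across the conflict at $x_I$. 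Consequently, for any ordered pair $(x_a,x_b)$ whose forward $C$-path does not pass through $x_I$ internally, the $C$-path itself is already properly colored, and for any pair whose forward $C$-path does pass through $x_I$, replacing the segment $x_{I-1}x_Ix_{I+1}$ by the chord arc $x_{I-1}x_k$ yields a properly colored candidate path $x_a \to \cdots \to x_{I-1} \to x_k \to \cdots \to x_b$, provided $x_b$ lies on the remaining forward arc from $x_k$ without any vertex being revisited.

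To handle the remaining pairs---those for which this single bridge does not suffice because $x_k$ ``overshoots'' $x_b$ or lies in the wrong segment relative to $x_b$---I would exploit the even out-chords guaranteed at the other vertices. For each $x_p$, let $e_p=x_px_{q_p}$ be a chosen even out-chord and color it so that $\lambda(e_p)\ne\lambda(x_{q_p}x_{q_p+1})$; then any path starting at $x_p$ with $e_p$ followed by a $C$-arc is properly colored. Using $e_a$ at the source together with the bridge $e$, and performing a case analysis on the relative positions of $a$, $q_a$, $b$, $k$ and on the parity of $q_a$, I would exhibit an explicit properly colored $(x_a,x_b)$-path in each case.

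The main obstacle is precisely this last case analysis: verifying that no matter how the out-chord $e_a$ at the source is positioned---wrapping or not, landing close to or far from $x_b$---some combination of $e_a$, the bridge $e$, and further $C$-segments and chords can always be assembled into a properly colored walk that uses each vertex at most once. The flexibility to choose $I$ in the first step---for instance, by picking $x_{I-1}$ whose out-chord lands as close to $x_I$ as possible, so that $e$ bridges a large range of targets---should make this case analysis tractable, and once every subcase is verified we obtain $\pc(D)\le 2$ as required.
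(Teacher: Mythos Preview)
Your proposal is not a proof: you explicitly defer the crucial step (``the main obstacle is precisely this last case analysis'') and only assert that it ``should be tractable''.  That case analysis is the entire content of the theorem.  Concretely, the scheme of combining the source chord $e_a$ with the single bridge $e$ can fail.  In $C_9(\{1,6\})$ with the conflict placed at $x_1$, the bridge is $x_0x_6$; to go from $x_7$ to $x_2$, the bridge overshoots ($x_7\to x_8\to x_0\to x_6$ lands beyond $x_2$ and cannot continue without revisiting $x_7$), and the source chord $e_7=x_7x_4$ also overshoots.  A working path such as $x_7x_4x_1x_2$ requires a \emph{chain} of chords, and you give no argument that such chains exist, are simple, and remain properly coloured under your rule $\lambda(e_p)\ne\lambda(x_{q_p}x_{q_p+1})$.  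That rule controls only the arc leaving the chord's head, not the arc entering its tail, so splicing a $C$-arc into $x_p$ with the chord $e_p$ can itself create a new conflict (e.g.\ $\lambda(x_1x_2)=\lambda(e_2)=1$ in the same example).

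The paper avoids all pairwise path-finding by proving a structural fact instead: $D$ contains a strong bipartite spanning subdigraph, whence $\pc(D)\le 2$ follows at once from Corollary~\ref{cor:bipartite}.  The key idea---which your minimality hint gestures at but never exploits---is to relabel so that a \emph{globally shortest} even chord is $x_px_0$, take the parity bipartition $V_1=\{x_{2i}\}$, $V_2=\{x_{2i+1}\}$, and observe that minimality forces every even chord with tail in $C[x_{p+1},x_{n-1}]$ to wrap past $x_0$ and hence cross the bipartition.  The subdigraph on $A(C)\setminus\{x_{n-1}x_0\}$ together with $x_px_0$ and all such wrapping chords is then bipartite and strong (each $x_r$ with $r>p$ has an out-chord to an earlier vertex).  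This structural detour is short and makes your intended case analysis unnecessary.
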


\begin{proof}
We will show that $D$ contains a strong bipartite spanning subdigraph. 
Let $C=x_0\dots x_{n-1}x_0$.  Renaming vertices if necessary, we may assume that $x_px_0$, for some~$p$, $1\le p\le n-2$,  is an even chord of $C$ with the minimum value of $|C[x_p,x_0]|$.

Let $V_1\cup V_2$ be the partition of $V(D)$ given by $V_1=\{x_{2i}:i\in\{0,\ldots,(n-1)/2\}\}$ and $V_2=\{x_{2i+1}:i\in\{0,\ldots,(n-3)/2\}\}$. 
We claim that there is a strong bipartite spanning subdigraph of $D$ with vertex partition $(V_1,V_2)$. 
Observe that every arc of $A(C)\setminus \{x_{n-1}x_0\}$ joins two vertices from  different sets of the partition. Furthermore, every even chord $x_ix_j$ such that $x_{n-1}x_0\subseteq C[x_i,x_j]$ also joins two vertices from  different sets of the partition. 
Our initial  assumption that $|C[x_p,x_0]|$ is minimum implies that, for every even chord $x_rx_s$ with  $x_r\in C[x_{p+1},x_{n-1}]$, we have $x_s\in C[x_1,x_{r-1}]$, and thus $x_{n-1}x_0\subseteq C[x_r,x_s]$. 
Therefore, every such chord joins vertices of two different sets of the partition. 

Let $A'$ be the set of even chords with  tail in $C[x_{p+1},x_{n-1}]$.  Then the  spanning subdigraph $D^*$, with set of arcs $A(D^*)=A(C)\setminus \{x_{n-1}x_0\}\cup\{x_px_0\}\cup A'$ , is bipartite. 
We finally claim that $D^*$ is a strong subdigraph.
Note that $x_0x_1\dots x_px_0$ is a cycle and $x_px_{p+1}\dots x_{n-1}$ a path in $D^*$.
It thus suffices to prove that there is a path from any vertex in $C[x_{p+1},x_{n-1}]$ to $v_p$. Indeed, this follows from the fact that for every vertex $x_r$ in $C[x_{p+1},x_{n-1}]$, there is an arc $x_rx_s$ in $A'$ with $s<r$.
\end{proof}

Thanks to Observation~\ref{obs:reversed-digraph}, and the fact that an even chord of some cycle $C$ in a digraph~$D$ is an even chord of the ``reversed cycle'' $C^{-1}$ in $D^{-1}$, we get the following corollary.

\begin{corollary}
\label{cor:even_head}
Let $D$ be a Hamiltonian digraph of odd order and $C$ be a Hamiltonian cycle of $D$. If  every vertex of $D$ is the head of an even chord of $C$,
then $\pc(D)\le 2$.
\end{corollary}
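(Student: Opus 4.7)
The plan is to deduce this corollary from Theorem~\ref{thm:even_tail} by passing to the reversed digraph, as the parenthetical remark in the statement suggests. Let $D$ be a Hamiltonian digraph of odd order with Hamiltonian cycle $C=x_0\dots x_{n-1}x_0$ such that every vertex of $D$ is the head of an even chord of~$C$. Consider the reversed digraph $D^{-1}$ together with the sequence $C^{-1}=x_0x_{n-1}x_{n-2}\dots x_1x_0$, which is a Hamiltonian cycle of $D^{-1}$ since the arcs of $C$ are precisely reversed.

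Next I would check that the hypothesis transfers correctly. For any chord $x_ix_j$ of $C$ in~$D$, the arc $x_jx_i$ is a chord of $C^{-1}$ in $D^{-1}$, and walking along $C^{-1}$ from $x_j$ to $x_i$ traverses exactly the vertices $x_j,x_{j-1},\dots,x_i$ (subscripts modulo $n$), so $|C^{-1}[x_j,x_i]|=|C[x_i,x_j]|$. In particular, $x_ix_j$ is an even chord of $C$ if and only if $x_jx_i$ is an even chord of $C^{-1}$. Hence, if every vertex $v\in V(D)$ is the head of some even chord of $C$ in $D$, then every vertex $v\in V(D^{-1})$ is the tail of some even chord of $C^{-1}$ in $D^{-1}$.

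Now Theorem~\ref{thm:even_tail} applies to $D^{-1}$ (which is also Hamiltonian and of odd order $n$), yielding $\pc(D^{-1})\le 2$. Finally, Observation~\ref{obs:reversed-digraph} gives $\pc(D)=\pc(D^{-1})\le 2$, as required. The only potentially delicate point is the bookkeeping showing that reversal preserves both Hamiltonicity and the parity of chord lengths, but both facts are immediate from the symmetry of the construction, so no real obstacle arises.
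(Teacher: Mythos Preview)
Your proof is correct and follows essentially the same approach as the paper: apply Theorem~\ref{thm:even_tail} to the reversed digraph $D^{-1}$ (using that an even chord of $C$ becomes an even chord of $C^{-1}$ with tail and head interchanged), and then invoke Observation~\ref{obs:reversed-digraph} to conclude $\pc(D)=\pc(D^{-1})\le 2$. The paper states this deduction in one sentence without spelling out the bookkeeping, but your more detailed verification is entirely in line with it.
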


Now, from Theorem~\ref{thm:even_tail} and Corollary~\ref{cor:even_head}, we get the following corollary.

\begin{corollary}
Let $D$ be a Hamiltonian digraph and $C$ be a Hamiltonian cycle of $D$. If $C$ contains only even chords 
and $d_D^-(v)\ge 2$  for every $v\in V(D)$ (or $d_D^+(v)\ge 2$  for every $v\in V(D)$), then $\pc(D)= 2$. 
\end{corollary}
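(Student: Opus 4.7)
The plan is to reduce this corollary directly to Theorem~\ref{thm:even_tail} and Corollary~\ref{cor:even_head}, after first disposing of the even-order case. The single key observation is that the degree hypothesis, combined with the assumption that $C$ has only even chords, is enough to guarantee that every vertex of $D$ is incident (in the appropriate direction) to an even chord.

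First, if $|V(D)|$ is even, then $C$ itself is a spanning directed cycle of even length, so alternating two colours along $C$ already yields a properly connected arc-colouring, and $\pc(D)\le 2$. Hence from now on I assume $|V(D)|$ is odd. Suppose that $d_D^+(v)\ge 2$ for every $v\in V(D)$. Along $C$ each vertex has exactly one out-neighbour, so the inequality $d_D^+(v)\ge 2$ forces $v$ to be the tail of at least one arc not in $C$ --- that is, of some chord of $C$. By the hypothesis that every chord of $C$ is even, this chord is an even chord, so every vertex of $D$ is the tail of an even chord, and Theorem~\ref{thm:even_tail} gives $\pc(D)\le 2$. The case ``$d_D^-(v)\ge 2$ for every $v$'' is strictly symmetric: each $v$ must be the head of a chord of $C$, which is again even by hypothesis, and Corollary~\ref{cor:even_head} produces the bound $\pc(D)\le 2$.

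For the matching lower bound $\pc(D)\ge 2$, recall that $\pc(D)=1$ forces $D$ to be the biorientation $\overleftrightarrow{K_n}$ of a complete graph; but for $n\ge 4$ any Hamiltonian cycle of $\overleftrightarrow{K_n}$ carries chords of every length $2,3,\dots,n-2$, and in particular it has chords of odd length, contradicting the hypothesis. Therefore $\pc(D)=2$. I do not foresee any real obstacle: the entire content of the corollary lies in the one-line observation that the degree hypothesis converts ``all chords of $C$ are even'' into ``every vertex is the tail (resp.\ head) of an even chord of $C$'', at which point Theorem~\ref{thm:even_tail} and Corollary~\ref{cor:even_head} do all the work.
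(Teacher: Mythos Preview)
Your argument is correct and mirrors the paper's own derivation: the paper simply records that the corollary follows from Theorem~\ref{thm:even_tail} and Corollary~\ref{cor:even_head}, and your observation that the degree hypothesis forces every vertex to be the tail (resp.\ head) of a chord---necessarily even by assumption---is exactly the intended link. Your lower-bound discussion, which the paper omits, is also fine for $n\ge 4$; note only that $\overleftrightarrow{K_3}$ satisfies all the stated hypotheses with $\pc(\overleftrightarrow{K_3})=1$, so the equality in the corollary tacitly requires $|V(D)|\ge 4$.
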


In the rest of this section we consider circulant digraphs and  prove the following theorem.

\begin{theorem}\label{thm:main}
If $n\ge 4$, $S\subseteq \{1,\ldots, n-1\}$, $|S|\ge 2$, and $1\in S$, then $\pc(C_n(S))\le 2$.
\end{theorem}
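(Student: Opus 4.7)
Since $1 \in S$ and $|S| \geq 2$, I would fix any $s \in S$ with $s \geq 2$. The digraph $C_n(\{1,s\})$ is a spanning subdigraph of $C_n(S)$, so any properly-connected arc-colouring of $C_n(\{1,s\})$ extends to one of $C_n(S)$. Hence it suffices to prove $\pc(C_n(\{1,s\})) \leq 2$ for every $s \in \{2,\ldots,n-1\}$. The plan is to proceed by a case analysis on the parities of $n$ and $s$: if $n$ is even, the step-$1$ Hamiltonian cycle $H = v_0 v_1 \cdots v_{n-1} v_0$ is an even directed cycle and admits a proper $2$-arc-colouring (alternating colours), so $\pc(C_n(\{1,s\})) \leq 2$; if $n$ is odd but $s$ is even, every step-$s$ arc $v_i v_{(i+s)\bmod n}$ is a chord of $H$ of even length $s$, so every vertex is the tail of an even chord and Theorem~\ref{thm:even_tail} gives $\pc \leq 2$.

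The essential case is $n$ odd and $s$ odd (so $s \geq 3$). Here step-$s$ chords of $H$ are odd, and Theorem~\ref{thm:even_tail} does not apply to $H$ directly. When $\gcd(s,n) = 1$, the step-$s$ arcs form a second Hamiltonian cycle $H' = v_0 v_s v_{2s} \cdots v_{(n-1)s} v_0$, in which each step-$1$ chord $v_i v_{i+1}$ has length $t := s^{-1}\bmod n$; whenever $t$ is even, I would apply Theorem~\ref{thm:even_tail} to $H'$ to conclude. The genuinely hard subcase is when $\gcd(s,n) > 1$, or $\gcd(s,n) = 1$ but $t$ is also odd (for instance $C_7(\{1,3\})$, where $3^{-1} \equiv 5 \pmod 7$): in this subcase neither of the two natural Hamiltonian cycles carries even chords, and none of the preceding results applies verbatim.

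For this residual subcase my plan is to build an explicit 2-arc-colouring. I would colour each step-$1$ arc $v_i v_{i+1}$ with $i \bmod 2$, yielding an almost-alternating colouring on $H$ whose only ``seam'' is the pair of consecutive arcs $v_{n-1}v_0$ and $v_0 v_1$ (both of colour $0$, as $n$ is odd); I would then colour the step-$s$ arcs so that, for every ordered pair $(u,v)$ whose short step-$1$ walk crosses this seam, a detour through step-$s$ arcs provides a properly coloured $uv$-walk. The main obstacle will be to show that such a consistent colouring on the step-$s$ arcs can always be found: since the orbits of the map $i \mapsto i+s \pmod n$ have size $n/\gcd(s,n)$ and the step-$s$ arcs decompose into $\gcd(s,n)$ disjoint directed cycles, the verification must be refined according to the structure of the subgroup $\langle s\rangle$ of $\mathbb{Z}_n$, and this is where the proof will require the most care.
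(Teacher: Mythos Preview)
Your initial reductions match the paper: pass to $C_n(\{1,s\})$, handle $n$ even via the even Hamiltonian cycle, and handle $n$ odd with $s$ even via Theorem~\ref{thm:even_tail}. The idea of switching to the step-$s$ Hamiltonian cycle $H'$ when $\gcd(s,n)=1$ and applying Theorem~\ref{thm:even_tail} there (when $t=s^{-1}\bmod n$ is even) is a correct shortcut not in the paper, but it still leaves infinitely many instances uncovered (e.g.\ $n=11$, $s=5$, where $t=9$; or any $n$ odd with $\gcd(s,n)>1$, such as $n=9$, $s=3$).

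The genuine gap is the residual subcase ($n,s$ odd with $\gcd(s,n)>1$ or $t$ odd): you only sketch a plan, and that plan is not a proof. Colouring $H$ almost-alternately with one seam and then asserting that the step-$s$ arcs can be coloured to route every ordered pair properly around the seam is exactly the hard content of the theorem; note also that you write ``properly coloured $uv$-walk'', which would only yield $\wc\le 2$, not $\pc\le 2$. The paper's treatment of this case (Lemma~\ref{lem:odd}) is substantial: after disposing of $n\in\{5,7\}$ separately, it shows for every odd $n\ge 9$ and odd $k$ that $C_n(\{1,k\})$ contains a strong bipartite spanning subdigraph, built by starting from an explicit even cycle and attaching ears, with a case analysis on the size of $k$ relative to $n$ (three main ranges $3k-2\le n$, $2k+1\le n\le 3k-4$, $k+2\le n\le 2k-1$, each splitting into further subcases on remainders). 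Your seam-repair approach, if it works at all, would require a comparably detailed verification; as written it does not establish the theorem.
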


Observe first that Theorem~\ref{thm:main}  obviously holds for $n$ even since $C_n(S)$ is Hamiltonian. 
When $n$ is odd, Theorem~\ref{thm:even_tail} and Corollary~\ref{cor:even_head} give the following result.

\begin{corollary}
\label{cor:even-integer}
If $n$ is odd,  $1\in S$, and $S$ contains an even integer, then $\pc(C_n(S))\le 2$.
\end{corollary}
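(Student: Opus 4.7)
My plan is to apply Theorem~\ref{thm:even_tail} directly: I would exhibit a Hamiltonian cycle $C$ of $C_n(S)$ in which every vertex is the tail of an even chord, and then the corollary follows immediately. The natural candidate for $C$ is the step-$1$ cycle $v_0 v_1 \dots v_{n-1} v_0$, which exists because $1 \in S$. To produce the required even chords I would use any even element $t \in S$ (which exists by hypothesis): for every index $i$, the arc $v_i v_{(i+t) \bmod n}$ belongs to $C_n(S)$ and, since $t \geq 2$, is a chord of $C$.

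The one point that deserves a quick verification is that each such chord is actually an even chord, i.e., that $|C[v_i, v_{(i+t) \bmod n}]| = t$. A brief case split handles this: if $i+t < n$ the subpath of $C$ is $v_i v_{i+1} \cdots v_{i+t}$ of length $t$; if $i+t \geq n$ the subpath runs $v_i v_{i+1} \cdots v_{n-1} v_0 \cdots v_{i+t-n}$ and has length $(n-i) + (i+t-n) = t$. Since $t$ is even the chord is even, so every vertex of $C_n(S)$ is the tail of an even chord of $C$.

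With $n$ odd and $C$ a Hamiltonian cycle meeting the hypothesis, Theorem~\ref{thm:even_tail} yields $\pc(C_n(S)) \leq 2$. I do not foresee any real obstacle; the argument is essentially a direct invocation, the only non-trivial line being the chord-length computation above. One could alternatively apply Corollary~\ref{cor:even_head}, since by the same reasoning each vertex is also the head of an even chord, namely $v_{(i-t) \bmod n} v_i$.
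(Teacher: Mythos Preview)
Your proposal is correct and matches the paper's intended derivation: the paper also obtains Corollary~\ref{cor:even-integer} directly from Theorem~\ref{thm:even_tail} (and equivalently Corollary~\ref{cor:even_head}) by using the step-$1$ Hamiltonian cycle and the even step $t\in S$ to produce an even chord at every vertex. Your explicit verification that the chord $v_i v_{(i+t)\bmod n}$ has length exactly $t$ along $C$ is the only detail worth spelling out, and you handle it correctly.
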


Thus to complete the proof of Theorem~\ref{thm:main} it is enough to show that for every two odd integers $n$ and $k$,  $n\ge 5$, $3\le k<n$, we have $\pc(C_n(\{1,k\}))= 2$. 
To prove this result we need the following lemma, which has been proved in \cite{DuMa19}, using a construction that we recall in Figure~\ref{fig:C_seven_three}. 

\begin{lemma}{\rm \cite{DuMa19}}
\label{lem:C_seven_three}
$\pc(C_7(\{1,3\}))=2$.
\end{lemma}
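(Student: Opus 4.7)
The proof has two parts. For the lower bound, observe that $C_7(\{1,3\})$ has only $14$ arcs while the biorientation $\overleftrightarrow{K_7}$ has $42$; since $C_7(\{1,3\})\ne\overleftrightarrow{K_7}$, it cannot be properly connected with a single colour, and therefore $\pc(C_7(\{1,3\}))\ge 2$. For the upper bound, the plan is to present the explicit $2$-arc-colouring depicted in Figure~\ref{fig:C_seven_three} (attributed to Ducoff et al.~\cite{DuMa19}) and verify that every ordered pair of distinct vertices is joined by a properly coloured directed path under it.

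To make the verification checkable, I would first list the colouring explicitly: for each of the seven ``short'' arcs $v_iv_{i+1}$ and each of the seven ``long'' arcs $v_iv_{i+3}$ (indices mod $7$) I would record, as read off the figure, which of the two colours it receives. The task then reduces to $42$ ordered pairs $(v_i,v_j)$. I would organise it by fixing a source $v_i$ and building from it the set of properly coloured paths: starting from $v_i$, one extends any current properly coloured path along an out-arc whose colour differs from the colour of its last arc. Because each vertex has out-degree $2$ and we use $2$ colours, there is typically one admissible continuation at each step, and a short breadth-first exploration of depth at most $3$ or $4$ already reaches all six other vertices from each given source.

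The main obstacle is simply bookkeeping: the colouring is not invariant under the natural cyclic automorphism of $C_7(\{1,3\})$, so rotational symmetry cannot be used to cut the number of cases below seven sources. To keep the exposition compact I would not list the paths in prose but present them in a single table whose rows are indexed by $v_i$ and whose entries give, for each target $v_j$, one concrete properly coloured $v_iv_j$-path; correctness of each entry can then be checked at a glance against the colouring recorded in the figure.
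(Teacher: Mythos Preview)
Your proposal is correct and matches the paper's treatment: the paper does not give its own argument but simply cites \cite{DuMa19} and reproduces the explicit $2$-arc-colouring in Figure~\ref{fig:C_seven_three}, leaving the verification implicit. Your plan to record the colouring and tabulate, for each source vertex, properly coloured paths to the remaining six targets is exactly the routine check that certifies the figure, so there is nothing to add or change.
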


\begin{figure}
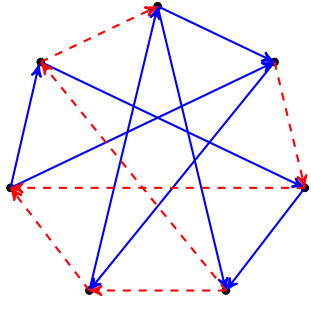

\begin{center}
\CSevenThree
\end{center}
\caption{A properly connected 2-arc-colouring of $C_7(\{1,3\})$.}
\label{fig:C_seven_three}
\end{figure}

Observe that $C_7(\{1,3\})$ and $C_7(\{1,5\})$ are isomorphic graphs, so that we also have $\pc(C_7(\{1,5\}))=2$. 
We are now able to deal with the remaining cases of Theorem~\ref{thm:main}.

\begin{lemma}
\label{lem:odd}
If $n$ and $k$ are odd,  $n\ge 5$ and $3\le k\le n-1$, then $\pc(C_n(\{1,k\}))= 2$.
\end{lemma}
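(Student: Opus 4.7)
The plan is to prove Lemma~\ref{lem:odd} by exhibiting, for each valid pair $(n,k)$ with $n,k$ odd, $n\ge 5$ and $3\le k\le n-1$, an explicit 2-arc-colouring of $D=C_n(\{1,k\})$ that makes $D$ properly connected. The lower bound $\pc(D)\ge 2$ is automatic because every vertex of $D$ has out-degree only $|S|=2$, so $D\ne\overleftrightarrow{K_n}$. The small case $n=7$ would be dispatched directly by Lemma~\ref{lem:C_seven_three}, together with the isomorphism $C_7(\{1,3\})\cong C_7(\{1,5\})$ induced by the automorphism $v_i\mapsto v_{5i\bmod 7}$ of $\mathbb{Z}_7$ (since $5\cdot\{1,3\}=\{5,1\}=\{1,5\}$).

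For $n\ge 9$ I would first try to reuse Theorem~\ref{thm:even_tail} by choosing a non-standard Hamiltonian cycle. When $\gcd(n,k)=1$, the chord arcs themselves form a Hamiltonian cycle $C'=v_0v_kv_{2k}\cdots v_{(n-1)k}v_0$, and relative to $C'$ each cycle arc $v_iv_{i+1}$ becomes a chord of length $k^{-1}\bmod n$. Whenever that inverse is even (e.g.\ for $(n,k)=(9,5),(11,3),(13,5),(13,7),\ldots$), Theorem~\ref{thm:even_tail} applied to $D$ with Hamiltonian cycle $C'$ immediately yields $\pc(D)\le 2$.

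The remaining cases---namely $\gcd(n,k)>1$ (such as $(9,3)$ or $(15,5)$) and $\gcd(n,k)=1$ with $k^{-1}\bmod n$ odd (such as $(11,5)$ or $(11,9)$)---have to be handled by a direct construction. I would use the standard Hamiltonian cycle $C=v_0v_1\cdots v_{n-1}v_0$, colour its arcs alternately by $\lambda(v_iv_{i+1})=1+(i\bmod 2)$ (which creates exactly one unavoidable ``defect'' at $v_0$, where $\lambda(v_{n-1}v_0)=\lambda(v_0v_1)=1$ because $n-1$ is even), and then assign colours to the chord arcs so that at each vertex the two out-arcs (respectively the two in-arcs) have different colours whenever possible, and so that appropriate short chord detours bypass the defect. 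Because $D$ is vertex-transitive, it then suffices to check that properly coloured paths exist from $v_0$ to every other vertex and from every other vertex to $v_0$; for pairs whose cycle path already avoids the defect at $v_0$ this is automatic, and for the others one inserts a suitably coloured chord arc at the right position to restore the alternation.

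The main obstacle will be designing the chord colouring so that the required detours exist uniformly in all remaining sub-cases, while avoiding the creation of new ``dead ends'' at vertices where both out-arcs (or both in-arcs) would share a colour. The analysis is sensitive to both the parity of $k^{-1}\bmod n$ and to $d=\gcd(n,k)$: when $d>1$ the chord arcs decompose into $d$ disjoint directed $(n/d)$-cycles rather than a single Hamiltonian chord cycle, so a single chord rarely suffices to realign the parity around the defect, and a valid detour typically has to chain together several chord arcs interleaved with cycle segments. Completing the proof therefore requires a careful case analysis on the residues of the source and target modulo $2$ and modulo $d$, verifying in each sub-case that the chord decomposition provides a detour whose colour sequence alternates properly.
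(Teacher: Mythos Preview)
Your proposal is not a proof but a plan whose hardest part is left undone. The opening moves are fine: the lower bound, the $n=7$ case, and the observation that when $\gcd(n,k)=1$ the chord arcs form a Hamiltonian cycle $C'$ relative to which every step arc is a chord of length $k^{-1}\bmod n$, so that Theorem~\ref{thm:even_tail} applies whenever that inverse is even. This genuinely disposes of many pairs $(n,k)$. But the residual cases ($\gcd(n,k)>1$, or $\gcd(n,k)=1$ with $k^{-1}\bmod n$ odd, e.g.\ $(9,3),(11,5),(15,9),\dots$) are exactly where the work lies, and for them you offer only a strategy---colour the Hamiltonian cycle alternately with a single defect at $v_0$ and repair it with chord detours---followed by the explicit admission that ``completing the proof requires a careful case analysis'' you do not perform. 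Worse, one step of the strategy is incorrect: you assert that by vertex-transitivity of $D$ it suffices to exhibit properly coloured paths from $v_0$ and to $v_0$. Vertex-transitivity is a property of the uncoloured digraph; the arc-colouring you describe has a unique defect at $v_0$ and is invariant under no nontrivial rotation, so an automorphism of $D$ does not carry properly coloured paths to properly coloured paths. You would have to produce a detour for every ordered pair $(v_i,v_j)$ with $i>j$, and concatenating a $v_iv_0$-path with a $v_0v_j$-path fails whenever the last and first colours agree. (You also never treat $n=5$.)

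By contrast, the paper never reasons about coloured paths directly for $n\ge 9$. Instead it shows, through a lengthy but completely explicit case analysis (three main cases on the size of $k$ relative to $n$, with several subcases), that $C_n(\{1,k\})$ contains a strong bipartite spanning subdigraph, built from a concrete even cycle to which carefully chosen ears are attached one at a time while the bipartition is verified at each step; Corollary~\ref{cor:bipartite} then gives $\pc\le 2$. Each subcase terminates with an actual spanning subdigraph rather than a deferred verification.
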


\begin{proof}
For $n=5$, there is  exactly one  circulant digraph with odd $k$, namely $C_5(\{1,3\})$. Observe that  $C_5(\{1,3\})$ is a strong tournament. Since the  proper connection number of every strong tournament with at least four vertices  is~2 (see~\cite{MaMo16}), 
we get $\pc(C_5(\{1,3\}))\le 2$ and the lemma holds in this case. 

For $n=7$, the result follows from Lemma~\ref{lem:C_seven_three} and the fact that $C_7(\{1,3\})$ and $C_7(\{1,5\})$ are isomorphic. 
Thus we may assume $n\ge 9$. 
We will show that, in that case, the digraph $C_n(\{1,k\})$ contains a strong bipartite spanning subdigraph, which will imply the desired result. 
This subdigraph will be constructed step by step, starting from an even cycle, and adding ears in such a way that the so-obtained subdigraph is still bipartite, until we get a spanning subdigraph. Doing so, the constructed subdigraph will clearly be strong.

The bipartition of the subdigraph will be given by means of a vertex 2-colouring~$c$, simply referred to as a 2-colouring in the rest of the proof.

Let $V(C_n(\{1,k\}))=\{x_0,\dots ,x_{n-1}\}$ and $n=\alpha k+r$, 
with $0\le r\le k-1$ and $\alpha\in\mathbb{N}$. 
We consider three cases, depending on the value of $k$.

\medskip
\noindent {\bf Case 1. $3k-2\le n.$}

We consider three subcases, depending on the value of $r$.

\medskip
\noindent {\bf Subcase  1.1. $r\le k-3.$}

\begin{figure}
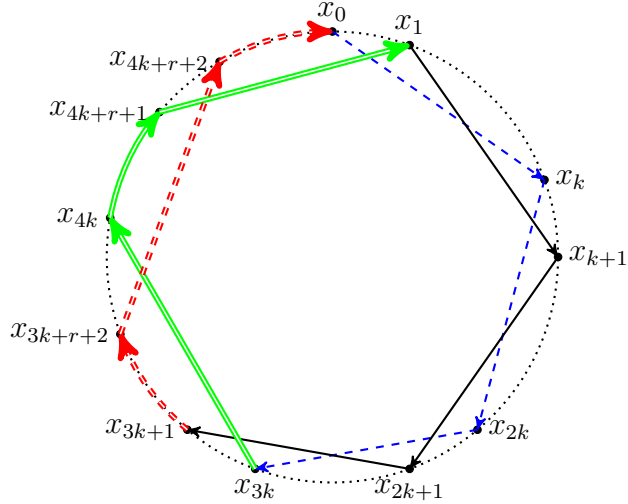

\begin{center}
\caseOneOne
\end{center}
\caption{(Subcase 1.1) A sample digraph with $\alpha=5$, together with the paths
$P_1$ (dashed, blue), $P_2$ (black), $P_3$ (double, green) and $P_4$ (double dashed, red).}
\label{fig:case_one_one}
\end{figure}

In that case, we necessarily have $\alpha\ge 3$.
Consider the four following paths (see Figure~\ref{fig:case_one_one}):
\begin{align*}
P_1&=x_0\Ri x_k\Ri x_{2k} \Ri \ldots \Ri x_{(\alpha -2)k},\\
P_2&=x_1\Ri x_{k+1}\Ri x_{2k+1}\Ri \ldots \Ri x_{(\alpha -2)k+1},\\
P_3&=x_{(\alpha-2)k}\Ri  x_{(\alpha-1)k}\ri  \ldots \ri x_{(\alpha-1)k+r+1}\Ri x_1,\\
P_4&=x_{(\alpha-2)k+1}\ri  \ldots \ri x_{(\alpha-2)k+r+2}\Ri   x_{(\alpha-1)k+r+2}\ri  \ldots \ri x_0.
\end{align*}

Observe that 
$C=P_1\cup P_2\cup P_3\cup P_4$ is a cycle. 
Furthermore, since $|P_1|=|P_2|=\alpha-2$, $|P_3|= r+3$ and $|P_4|=k+r$, we get
$|A(C)|=2\alpha+2r+k-1$, and thus $C$ is an even cycle. 
Let $c$ be a 2-colouring  of $C$ with $c(x_1)=1$. 

The path $P_2\cup P_4$ goes from $x_1$ to $x_0$ in $C$, thus $d_C(x_1,x_0)=\alpha+k+r-2$. Since $n=\alpha k+r$ is odd and $k$ is odd, $\alpha $ and $r$ are of different parity. Hence, $d_C(x_1,x_0)$ is even and thus $c(x_0)=c(x_1)=1$. 
More generally, we have 
$c(x_{ik})=c(x_{ik+1})=1$ for $i\in \{0,\ldots ,\alpha -2\}$, $i$ even, 
and $c(x_{ik})=c(x_{ik+1})=2$ for $i\in \{0,\ldots ,\alpha -2\}$, $i$ odd. 

We are now ready to add ears to the cycle $C$ in order to get a spanning subdigraph of $C_n(\{1,k\})$. 
We consider the following paths of $C_n(\{1,k\})$:
\begin{align*}
Q_i&=x_{ik+2}\ri x_{ik+3}\ri\ldots \ri x_{(i+1)k-1}, \text{ for every } i\in\{0,\ldots, \alpha -3\},\\
Q&=x_{(\alpha-2)k+r+3}\ri x_{(\alpha-2)k+r+4}\ri \ldots \ri x_{(\alpha -1)k-1}.
\end{align*}

Observe  that $V(C_n(\{1,k\}))=V(C)\cup \bigcup_{i\in\{0,\ldots, \alpha -3\}}V(Q_i)\cup V(Q)$. Moreover, since $k$ is odd, the length of each path $Q_i$, $0\le i\le \alpha -3$, is even.

We will add to the cycle $C$ an ear $E_0$ containing $Q_0$ and then, sequentially,
we will add to $C\cup E_0\cup\dots\cup E_{i-1}$ an ear $E_i$ containing $Q_i$, for each $i$, $1\le i\le \alpha-3$.
(We will add later an ear $E$ containing $Q$ to $C\cup E_0\cup\dots\cup E_{\alpha-3}$).
Since each path $Q_i$ has even length, the ends of each ear $E_i$ must have the same colour.
We proceed as follows.

\begin{itemize}
\item We first let $E_0=x_{(\alpha-1)k+r+2} \Ri Q_0 \ri x_k$.
As observed above, we have $c(x_k)=2$. 
Moreover, since $d_C(x_{(\alpha-1)k+r+2},x_0)=k-2$ and $c(x_0)=1$, we also have $c(x_{(\alpha-1)k+r+2})=2$. 
Hence, $C\cup E_0$ is a strong bipartite subdigraph of $C_n(\{1,k\})$ to which we can extend the 2-colouring $c$. In particular, we have $c(x_2)=1$.

\item We now let $E_1 = x_2\Ri Q_1\ri x_{2k}$. Since $c(x_2)=c(x_{2k})=1$, $C\cup E_0\cup E_1$ is a strong bipartite subdigraph of $C_n(\{1,k\})$ to which we can extend the 2-colouring $c$.
Note here that we have $c(x_{k+2})=2$. 

\item Assume finally that we have added ears $E_0,\dots, E_{i-1}$, for some $i<\alpha-3$, in such a way that $C\cup E_0\cup\dots\cup E_{i-1}$ is a strong bipartite subdigraph of $C_n(\{1,k\})$ and the corresponding 2-colouring $c$ is such that
$c(x_{jk+2})=1$ if $j$ is even and $c(x_{jk+2})=2$ if $j$ is odd, for every $j\in\{0,\ldots i-1\}$. 

We then let $E_i=x_{(i-1)k+2}\Ri Q_i\ri x_{(i+1)k}$. Since $c(x_{(i-1)k+2})=c(x_{(i+1)k})=1$ if $i$ is odd, and $c(x_{(i-1)k+2})=c(x_{(i+1)k})=2$ if $i$ is even, 
$C\cup E_0\cup\dots\cup E_{i}$ is a strong bipartite subdigraph of $C_n(\{1,k\})$
to which we can extend the 2-colouring $c$.
Note that we then have $c(x_{ik+2})=1$ if $i$ is even and $c(x_{ik+2})=1$ if $i$ is odd.

\end{itemize}

We finally let $E=x_{(\alpha-2)k+r+2}\ri Q\Ri x_{\alpha k-1}$. 
In order to obtain a bipartite subdigraph, we need to have 
$c(x_{(\alpha-2)k+r+2})=c(x_{\alpha k-1})$ if $|E|$ is even,
and $c(x_{(\alpha-2)k+r+2}) \neq c(x_{\alpha k-1})$ if $|E|$ is odd.
To prove that this holds, it is enough to show that 
$|E|$ and  $d_C(x_{(\alpha-2)k+r+2},x_{\alpha k-1})$ have the same parity.
Indeed, we have $|E|=k-r-2$ and 
\begin{align*}
d_C(x_{(\alpha-2)k+r+2},x_{\alpha k-1}) &= d_{P_4}(x_{(\alpha-2)k+r+2},x_{\alpha k-1})\\
&=1+d_{P_4}(x_{(\alpha-1)k+r+2},x_{\alpha k-1})=k-r-2,
\end{align*}
which completes this subcase.

\begin{figure}
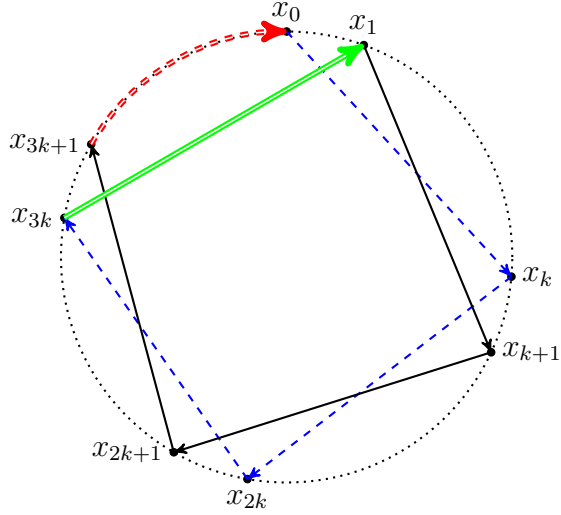

\begin{center}
\caseOneTwo
\end{center}
\caption{(Subcase 1.2) A sample digraph with $\alpha=3$, together with the paths
$P_1$ (dashed, blue), $P_2$ (black), $P_3$ (double, green) and $P_4$ (double dashed, red).}
\label{fig:case_one_two}
\end{figure}

\medskip
\noindent {\bf Subcase 1.2. $r=k-1.$} 

In that case, we have $n=\alpha k+k-1$. Since $n$ and $k$ are odd, it follows that $\alpha $ is odd and thus $\alpha\ge 3$. Consider the four following paths (see Figure~\ref{fig:case_one_two}):
\begin{align*}
P_1&=x_0\Ri x_k\Ri  \ldots \Ri x_{\alpha k},\\
P_2&=x_1\Ri x_{k+1}\Ri  \ldots \Ri x_{\alpha k+1},\\
P_3&=x_{\alpha k}\Ri x_1,\\
P_4&=x_{\alpha k+1}\ri x_{\alpha k+2}\ri  \ldots  \ri x_{\alpha k+k-2}\ri x_0.
\end{align*}

Observe that   $C=P_1\cup P_2\cup P_3\cup P_4$ is a cycle. 
Furthermore, since $|P_1|=|P_2|=\alpha, |P_3|= 1$ and $|P_4|=k-2$, we get $|A(C)|=2\alpha+k-1$, and thus $C$ is an even cycle. 
Let $c$ be a 2-colouring  of $C$ with $c(x_1)=1$. 

The path $P_1\cup P_3$ goes from $x_0$ to $x_1$ in $C$, thus $d_C(x_0,x_1)=|P_1\cup P_3|=\alpha+1$. 
This implies $c(x_1)=c(x_0)$ and thus 
$c(x_{ik})=c(x_{ik+1})=1$ for $i\in \{0,\ldots ,\alpha \}$, $i$ even, 
and $c(x_{ik})=c(x_{ik+1})=2$ for $i\in \{0,\ldots ,\alpha \}$, $i$ odd. 

We now consider the following paths of $C_n(\{1,k\})$:
\begin{align*}
Q_i=x_{ik+2}\ri x_{ik+3}\ri \dots \ri x_{(i+1)k-1} \text{ for every } i\in\{0,\ldots, \alpha -1\}.
\end{align*}

Observe  that $V(C_n(\{1,k\}))=V(C)\cup \bigcup_{i\in\{0,\ldots, \alpha -1\}}V(Q_i)$. Similarly as before, we will add to the cycle $C$ an ear $E_0$ containing $Q_0$ and then, sequentially,
we will add to $C\cup E_0\cup\dots\cup E_{i-1}$ an ear $E_i$ containing $Q_i$, for each $i$, $1\le i\le \alpha-1$. 
Since every path $Q_i$ has even length, the ends of each ear $E_i$ must have the same colour.
We proceed as follows.

\begin{itemize}
\item We first let $E_0=x_{\alpha k+1}\Ri Q_0\ri x_k$. 
Since $d_C(x_{\alpha k+1},x_{k})=|P_4\Ri x_k|=k-1$ is even, we have $c(x_{\alpha k+1})=c(x_{k})$. 
Hence, $C\cup E_0$ is a strong bipartite subdigraph of $C_n(\{1,k\})$ to which we can extend the 2-colouring $c$. In particular, since $c(x_{\alpha k+1})=2$ and $x_{\alpha k+1}x_2\in A(C\cup E'_0)$, we have $c(x_2)=1$.

\item Assume now that we have added ears $E_0,\dots E_{i-1}$, for some $i<\alpha-1$,
in such a way that $C\cup E_0\cup\dots\cup E_{i-1}$ is a strong bipartite subdigraph of $C_n(\{1,k\})$,
and the corresponding 2-colouring $c$ is such that $c(x_{jk+2})=1$ if $j$ is even and $c(x_{jk+2})=2$ if $j$ is odd, for every $j\in\{0,\ldots i-1\}$. 

We then let $E_i=x_{(i-1)k+2}\Ri E_i\ri x_{(i+1)k}$. 
Since $c(x_{(i-1)k+2})=c(x_{(i+1)k})=1$ if $i$ is odd and $c(x_{(i-1)k+2})=c(x_{(i+1)k})=2$ if $i$ is even, 
$C\cup E_0\cup\dots\cup E_{i}$ is a strong bipartite subdigraph of $C_n(\{1,k\})$ to which we can extend the 2-colouring $c$.
Note that we then have $c(x_{ik+2})=1$ if $i$ is even and $c(x_{ik+2})=2$ if $i$ is odd. 
\end{itemize}

The so-obtained subdigraph $C\cup E_0\cup\dots\cup E_{i\alpha-1}$
is a strong bipartite spanning subdigraph of $C_n(\{1,k\})$, which completes this subcase.

\medskip
\noindent {\bf Subcase 1.3. $r=k-2.$} 

In this case, we have $n=\alpha k+k-2$. 
Since $n$ and $k$ are odd, it follows that $\alpha$ is even. 
We consider two subcases, depending on the value of $\alpha$.

\medskip
\noindent {\it Subcase 1.3.1. $\alpha \ge 4$ ($\alpha$ even).} 

\begin{figure}
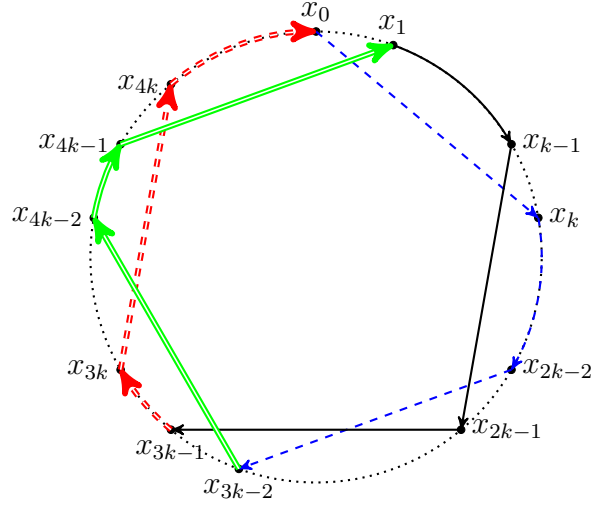

\begin{center}
\caseOneThreeOne
\end{center}
\caption{(Subcase 1.3.1) A sample digraph with $\alpha=4$, together with the paths
$P_1$ (dashed, blue), $P_2$ (black), $P_3$ (double, green) and $P_4$ (double dashed, red).}
\label{fig:case_one_three_one}
\end{figure}

Consider the four following paths (see Figure~\ref{fig:case_one_three_one}):
\begin{align*}
P_1&=x_0\Ri x_k\ri  \dots \ri x_{2k-2}\Ri x_{3k-2} \Ri \dots \Ri x_{(\alpha -1)k-2},\\
P_2&=x_1\ri  x_2\ri  \dots \ri x_{k-1}\Ri x_{2k-1}\Ri x_{3k-1}\Ri \dots \Ri x_{(\alpha-1) k-1},\\
P_3&=x_{(\alpha -1) k-2}\Ri x_{\alpha k-2}\ri x_{\alpha k-1}\Ri x_1,\\
P_4&=x_{(\alpha -1) k-1}\ri x_{(\alpha -1) k}\Ri x_{\alpha k}\ri x_{\alpha k+1}\ri  \dots  \ri x_{\alpha k+k-3}\ri x_0.
\end{align*}

Observe that   $C=P_1\cup P_2\cup P_3\cup P_4$ is a cycle. Furthermore, since $|P_1|=|P_2|=\alpha +k-4, |P_3|= 3$ and $|P_4|=k$, we get $|A(C)|=2\alpha+3k-5$, and thus $C$ is  an even cycle. 
Let $c$ be a 2-colouring  of $C$ with $c(x_0)=1$. 

We first claim that $c(x_{ik-2})=c(x_{ik-1})=1$ if $i$ is even  and 
$c(x_{ik-2})=c(x_{ik-1})=2$ if $i$ is odd for every $i\in \{2,\ldots ,\alpha -1\}$.
Indeed, the path $P_1\cup P_3$ goes from $x_0$ to $x_1$ in $C$, 
and thus $d_C(x_0,x_1)=|P_1\cup P_3|=\alpha+k-1$. 
Since $\alpha$ is even, $d_C(x_0,x_1)$ is even and thus $c(x_0)=c(x_1)=1$. 
Since  $c(x_0)=1$ and $x_0x_k\in A(P_1)$, we have $c(x_k)=2$. 
Since $c(x_1)=1$ and $d_{P_2}(x_1,x_{k-1})=k-2$ is odd, we have $c(x_{k-1})=2$. 
Since $c(x_{k-1})=2$ and $x_{k-1}x_{2k-1}\in A(P_2)$, we have $c(x_{2k-1})=1$.
Since $c(x_k)=2$ and $d_{P_1}(x_k,x_{2k-2})=k-2$ is odd, we have $c(x_{2k-2})=1$. 
Eventually, we get
$c(x_{ik-2})=c(x_{ik-1})=1$ if $i$ is even  and 
$c(x_{ik-2})=c(x_{ik-1})=2$ if $i$ is odd, for every $i\in \{2,\ldots ,\alpha -1\}$. 

We now consider the following paths of $C_n(\{1,k\})$:
\begin{align*}
Q_i &= x_{ik}\ri x_{ik+1}\ri \dots \ri x_{(i+1)k-3} \text{ for every } i\in\{2,\ldots, \alpha -2\},\\
Q &= x_{(\alpha-1)k+1}\ri x_{(\alpha-1)k+2}\ri \dots \ri x_{\alpha k-3}.
\end{align*}

Observe  that $V(C_n(\{1,k\}))=V(C)\cup \bigcup_{i\in\{2,\ldots, \alpha -2\}}V(Q_i)\cup V(Q)$. 
Again, we will add to the cycle $C$ an ear $E_2$ containing $Q_2$ and then, sequentially,
we will add to $C\cup E_2\cup\dots\cup E_{i-1}$ an ear $E_i$ containing $Q_i$, for each $i$, $3\le i\le \alpha-2$. 
(We will add later an ear $E$ containing $Q$ to $C\cup E_2\cup\dots\cup E_{\alpha-2}$).
Since every path $Q_i$ has even length, the ends of each ear $E_i$ must have the same colour.
We proceed as follows.

\begin{itemize}
\item We first let $E_2=x_{k}\Ri Q_2\ri x_{3k-2}$. 
Since $c(x_{k})=2$ and $c(x_{3k-2})=2$, $C\cup E_2$ is a strong bipartite subdigraph of $C_n(\{1,k\})$ and we can extend the 2-colouring $c$. 
In particular, since $c(x_k)=2$ and $x_kx_{2k}$ is an arc in $E_2$, we have $c(x_{2k})=1$.

\item Assume now that we have added ears $E_2,\ldots, E_{i-1}$ for some $i<\alpha-2$,
in such a way that $C\cup E_2\cup\dots\cup E_{i-1}$ is a strong bipartite subdigraph of $C_n(\{1,k\})$, and the corresponding 2-colouring $c$ is such that 
$c(x_{jk})=1$ if $j$ is even and $c(x_{jk})=2$ if $j$ is odd, for every $j\in\{2,\ldots i-1\}$. 

We then let $E_i=x_{(i-1)k}\Ri Q_i\ri x_{(i+1)k-2}$. 
Since $c(x_{(i-1)k})=c(x_{(i-1)k-1})=c(x_{(i-1)k-2})$, we have  $c(x_{(i-1)k})=c(x_{(i+1)k-2})$. 
Therefore, $C\cup E_2\cup\dots\cup E_{i-1}\cup E_i$ is a strong bipartite subdigraph of $C_n(\{1,k\})$ to which we can extend the 2-colouring~$c$.
Note that we have $c(x_{ik})=1$ if $i$ is even and $c(x_{ik})=2$ if $i$ is odd. 
\end{itemize}

We finally let $E=x_{(\alpha-2)k+1}\Ri Q\ri x_{\alpha k-2}$. 
Since the path $Q$ is of odd length, we need to have $c(x_{(\alpha-2)k+1})\neq c(x_{\alpha k-2})$ for $C\cup E_2\cup\dots\cup E_{\alpha-2}\cup E$ to be a bipartite subdigraph of $C_n(\{1,k\})$.
This is indeed the case since a path in $C\cup E_2\cup\dots\cup E_{\alpha-2}$ from $x_{(\alpha-2)k+1}$ to $x_{\alpha k-2}$ is given by
$x_{(\alpha-2)k+1}\ri x_{(\alpha-2)k+2}\ri \dots \ri x_{(\alpha -1)k-2}\Ri x_{\alpha k-2}$, whose length is $k-2$, an odd number.
This completes this subcase.

\begin{figure}
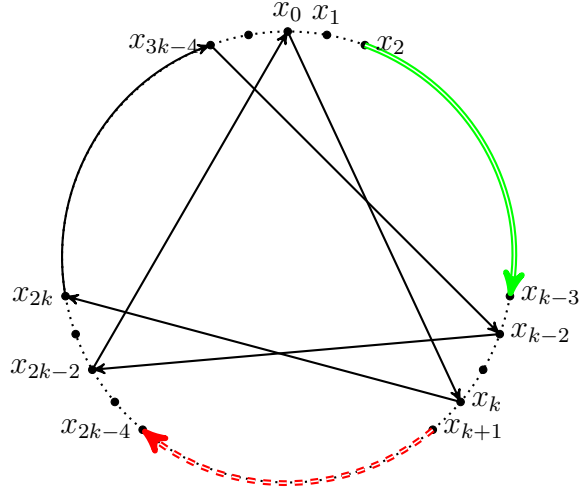

\begin{center}
\caseOneThreeTwo
\end{center}
\caption{(Subcase 1.3.2) A sample digraph, together with the cycle $C$ (black) and the paths $Q_1$ (double, green) and $Q_2$ (double dashed, red).}
\label{fig:case_one_three_two}
\end{figure}

\medskip
\noindent {\it  Subcase 1.3.2. $\alpha = 2.$}

In that case, we get $n=3k-2$ and, since $n\ge 9$, $k\ge 4$.   

Consider the following cycle (see Figure~\ref{fig:case_one_three_two}):
$$C=x_0 \Ri x_k \Ri x_{2k}\ri x_{2k+1}\ri \ldots \ri x_{3k-4}\Ri x_{k-2}\Ri x_{2k-2}\Ri x_0.$$
Since $|A(C)|=k+1$, $C$ is an even cycle. Let $c$ be a 2-colouring  of $C$ with  $c(x_0)=1$. 

We consider the two following paths of $C_n(\{1,k\})$ (see Figure~\ref{fig:case_one_three_two}):
\begin{align*}
Q_1 &=x_2\ri \ldots \ri x_{k-3}\\
Q_2 &=x_{k+1}\ri \ldots \ri x_{2k-4}
\end{align*}

We first add to $C$ the two ears 
$E_1= x_{2k}\Ri Q_1 \ri x_{k-2}$ and
$E_2=x_k\ri Q_2\Ri x_{3k-4}$.
Since the length of both paths $Q_1$ and $Q_2$ is $k-5$, an even number, we need to have
$c(x_{2k})=c(x_{k-2})$ and $c(x_k)=c(x_{3k-4})$ for $C\cup E_1\cup E_2$ to be a bipartite subdigraph of $C_n(\{1,k\})$.
This is indeed the case since $x_0\Ri x_k\Ri x_{2k}$ and $x_{k-2}\Ri x_{2k-2}\Ri x_0$ are paths in $C$.
We can thus extend the 2-colouring $c$ to $C\cup E_1\cup E_2$.

There are now five remaining vertices, namely $x_1$, $x_{k-1}$, $x_{2k-3}$, $x_{2k-1}$ and $x_{3k-3}$, not included in $C\cup E_1\cup E_2$.
We then sequentially add the fives ears
\begin{align*}
E_3 &=x_0\ri x_1\Ri x_{k+1},\\
E_4 &=x_{2k-2}\ri x_{2k-1} \Ri x_1,\\
E_5 &=x_{k-2}\ri x_{k-1} \Ri x_{2k-1},\\
E_6 &=x_{3k-4}\ri x_{3k-3} \Ri x_{k-1}, \text{ and}\\
E_7 &=x_{2k-4}\ri x_{2k-3} \Ri x_{3k-3},
\end{align*}
getting at each step a bipartite subdigraph of $C_n(\{1,k\})$, so that the 2-colouring $c$ can be sequentially extended.
This is indeed the case since 
$x_0\Ri x_k\ri x_{k+1}$, 
$x_{2k-2}\Ri x_0\ri x_1$, 
$x_{k-2}\Ri x_{2k-2}\ri x_{2k-1}$,
$x_{3k-4}\Ri x_{k-2}\ri x_{k-1}$ and
$x_{2k-4}\Ri x_{3k-4}\ri x_{3k-3}$
are paths in $C\cup E_2$, $C\cup E_3$, $C\cup E_4$, $C\cup E_5$ and $E_2 \cup E_6$,
respectively, and thus the endvertices of the five above defined ears have the same colour.

We hence get that $C\cup E_1\cup \dots\cup E_7$ is
a strong bipartite spanning subdigraph  of $C_n(\{1,k\})$, which completes this subcase.

\begin{figure}
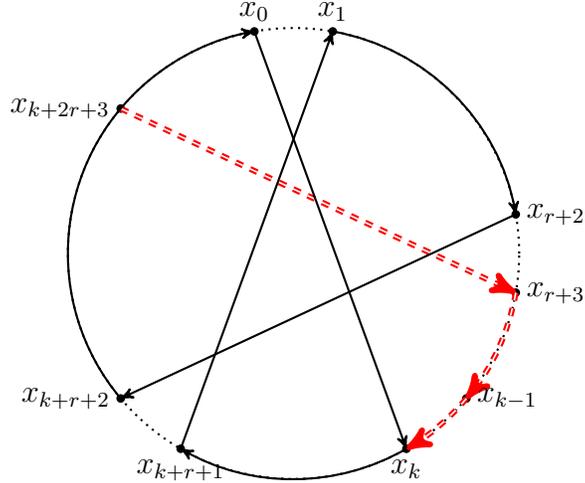

\begin{center}
\caseTwo
\end{center}
\caption{(Case 2) A sample digraph, together with the cycle $C$ (black) and the ear  $E$ (double dashed, red).}
\label{fig:case_two}
\end{figure}

\medskip
\noindent {\bf Case 2. $2k+1\le n\le 3k-4$}

The assumptions of  this case imply $n=2k+r$ with $1\le r\le k-4$. 
Let us consider the following cycle (see Figure~\ref{fig:case_two}):
$$C=x_0\Ri x_k\ri  \ldots \ri x_{k+r+1}\Ri x_1\ri \ldots \ri x_{r+2}\Ri x_{k+r+2}\ri\ldots \ri x_0.$$

Observe that the vertices of $C_n(\{1,k\})$ that are not in $C$ form a path 
$Q=x_{r+3}\ri \ldots \ri x_{k-1}$ (since $r\le k-4$, $Q$ has at least one vertex)
and that $C$ is an even cycle, since $|V(C)|=n-|V(Q)|=n-(k-r-3)=k+2r+3$.

We then add to $C$ the ear $E=x_{k+2r+3}\Ri Q\ri x_k$. 
Since $d_C(x_{k+2r+3},x_k)=k-r-2$ and  $|E|=k-r-2$, $C\cup E$ is a strong bipartite spanning  subdigraph of $C_n(\{1,k\})$, which completes this subcase.

\medskip
\noindent {\bf Case 3. $k+2\le n\le 2k-1$.}

In this case we will construct an even cycle $C$ by concatenating several paths of length $n-k+1$. 
Our assumptions imply $3\le n-k+1\le k$. 
Let $n=(n-k+1)t+s$ with $0\le s\le n-k$. 

Observe that if $t=1$, then $s=k-1$. 
In that case, since $s\le n-k$, we have $s=k-1\le n-k$. On the other hand, we have  $n-k+1\le k$, that is, $k-1\ge n-k$. 
We finally get $s=n-k$ if $t=1$.  

We will consider five subcases.

\begin{figure}
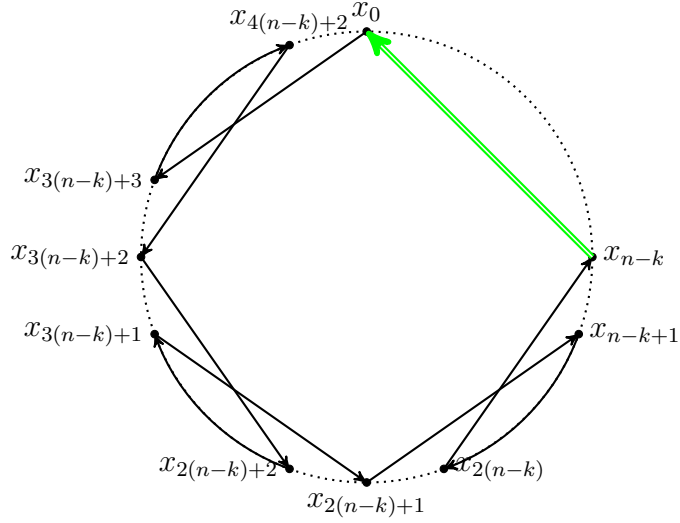

\begin{center}
\caseThreeOne
\end{center}
\caption{(Subcase 3.1) A sample digraph for $t=3$, together with the cycle $P_0\cup P_1\cup P_2$ (black) and the path $P$ (double, green).}
\label{fig:case_three_one}
\end{figure}

\medskip
\noindent {\bf Subcase 3.1.} {\it $t\ge 1$ and $s= n-k.$}

In that case, $n=t(n-k+1)+n-k$. Since $n$ is odd, it follows that $t$ is also odd. Consider the following paths (see Figure~\ref{fig:case_three_one}):
\begin{align*}
P_i&= x_{ik-(i-1)n-i}\Ri x_{(i+1)k-in-i}\ri \ldots \ri x_{ik-(i-1)n-(i+1)}\Ri x_{(i+1)k-in-(i+1)},\\
&\ \ \ \ \text{for every } i\in\{0,\ldots , t-1\},\\
P &= x_{n-k}\Ri  x_0.
\end{align*}

\noindent 

Observe that $C=\bigcup_{i=0}^{t-1}P_i\cup P$ is an even cycle,  since $|P_i|=n-k+1$ for every $i$ and $|P|=1$. 
Furthermore, the vertices of $C_n(\{1,k\})$ not belonging to $C$ form 
a path $Q= x_1\ri \ldots \ri x_{n-k-1}$.

We then add to $C$ the ear $E=x_{n-k+1}\Ri x_1\ri \ldots \ri x_{n-k-1}\ri x_{n-k}$. 
Since $d_C(x_{n-k+1}, x_{n-k})=n-k$ and $|E|=n-k+2$ (these two values have thus the same parity), we get that $C\cup E$ is a strong bipartite spanning subdigraph of $C_n(\{1,k\})$, which completes this subcase.

\begin{figure}
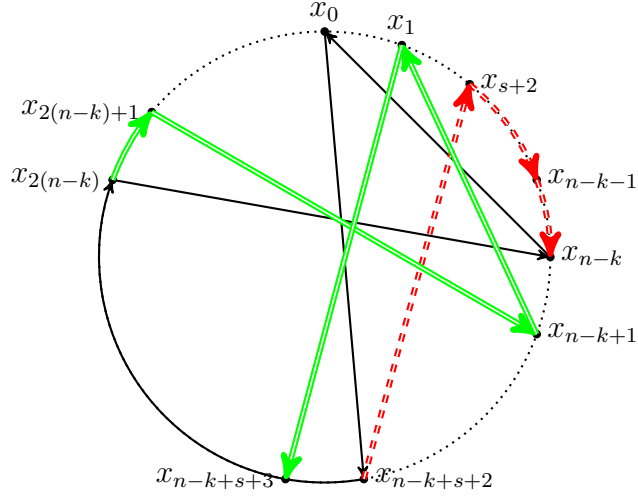

\begin{center}
\caseThreeTwo
\end{center}
\caption{(Subcase 3.2) A sample digraph, together with the cycle $C$ (black) and the ears $E$ (double dashed, red) and $E_0$ (double, green).}
\label{fig:case_three_two}
\end{figure}

\medskip
\noindent {\bf Subcase 3.2.} {\it  $t=2$ and  $0\le s\le n-k-2.$}

In that case, we have $n=2(n-k+1)+s$, and thus $s$ must be odd, so that $1\le s\le n-k-3$. 
Let us consider the following cycle (see Figure~\ref{fig:case_three_two}):
$$C=x_0\Ri x_{n-k+s+2}\ri \ldots \ri x_{2(n-k)}\Ri x_{n-k}\Ri x_0.$$
Note that $|A(C)|=n-k-s+1$, and thus $C$ is an even cycle.

We first add to $C$ the ear 
$$E=x_{n-k+s+2}\Ri x_{s+2}\ri \ldots \ri x_{n-k-1}\ri x_{n-k}.$$
Since $d_C(x_{n-k+s+2}, x_{n-k})=2$ and $|E|=n-k-s-1$ is even, 
$C\cup E$ is a strong bipartite subdigraph of $C_n(\{1,k\})$.

Now, the vertices of $C_n(\{1,k\})$ not belonging to $C\cup E$ can be partitioned into the following three sets,
\begin{align*}
&\{x_{2(n-k)+1}, x_{2(n-k)+2}, \ldots ,x_{2(n-k)+1+s}\}\\
&\{x_1, x_2, \ldots ,x_{s+1}\}, \text{ and }\\
&\{x_{n-k+1},x_{n-k+2},\ldots, x_{n-k+s+1}\},
\end{align*}
each containing $s+1$ vertices.
 
We now add sequentially the following $s+1$ ears:
\begin{align*}
E_i=x_{2(n-k)+i}\ri x_{2(n-k)+1+i}\Ri x_{n-k+1+i}\Ri x_{1+i}\Ri x_{n-k+s+3+i}, 
\text{ for every } i\in\{0,\ldots, s\}.
\end{align*}

Note that each of these ears is of length~4 and contains exactly one vertex of each set.
Since the distance between $x_{n-k+s+3+i}$ and $x_{2(n-k)+i}$ 
in $C\cup E\cup E_0\cup\dots\cup E_{i-1}$, for every $i\in\{0,\ldots, s\}$, 
is $n-k-s-3$, an even number, we get that $C\cup E\cup E_0\cup\dots\cup E_{s}$
is a strong bipartite spanning subdigraph of $C_n(\{1,k\})$, which completes this subcase.

\medskip 
\noindent {\bf Subcase 3.3.} {\it  $t=2$ and $s=n-k-1.$}

In that case, we have $n=3(n-k)+1$.
We claim that $C_n(\{1,k\})$ is isomorphic to $C_n(\{1,3\})$, the digraph considered in {\bf Case 1}. 

Observe first that the chords of $C_n(\{1,k\})$ form the cycle $C_n$. 
Indeed, let $D'$ be the digraph obtained from $C_n(\{1,k\})$ by reversing every arc which is not a chord. Observe that $D'$ is isomorphic to $C_n(\{1,r\})$, with $r=n-k$. Since $n=3r+1$, we get that $n$ and  $r$ are relatively prime. 
Therefore, the chords of $D'$ form a cycle that goes through all vertices of $D'$.
Let  $C'$ be the cycle of $C_n(\{1,k\})$ induced by the chords. The distance on $C'$ between any two consecutive vertices of $C$ is~3, and thus $C_n(\{1,k\})$ 
and $C_n(\{1,3\})$ are isomorphic digraphs, which completes this subcase.

\begin{figure}
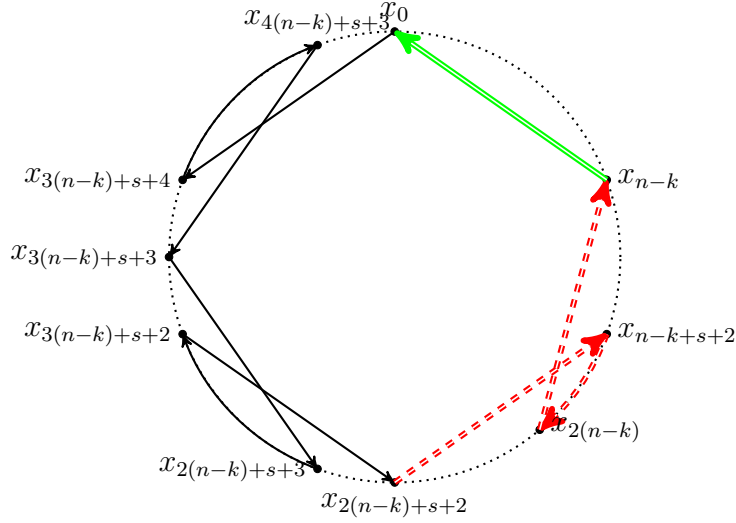

\begin{center}
\caseThreeFour
\end{center}
\caption{(Subcase 3.4) A sample digraph for $t=4$, together with the cycle $C=P_0\cup P_1$ (black) and the paths $P$ (double dashed, red) and $P'$ (double, green).}
\label{fig:case_three_four}
\end{figure}

\medskip
\noindent {\bf Subcase 3.4.} {\it $t\ge 3$ and $0\le s\le n-k-2.$}

Consider the following paths (see Figure~\ref{fig:case_three_four}):
\begin{align*}
P_i &=x_{(t-i)(n-k+1)+s}\Ri x_{(t-i-1)(n-k+1)+s+1}\ri \ldots \ri x_{(t-i)(n-k+1)+s-1}\Ri x_{(t-i-1)(n-k+1)+s},\\
  &\ \ \ \ \text{ for every } i\in\{0,\ldots , t-3\},\\
P &= x_{2n-2k+s+2}\Ri x_{n-k+s+2}\ri \ldots \ri x_{2n-2k}\Ri x_{n-k}, \text{ and}\\
P' &=x_{n-k}\Ri x_0.
\end{align*}

Note that $|P'|=1$, $|P|=n-k-s$ and $|P_i|=n-k+1$. 
Since $n$ is odd and $n=(n-k+1)t+s$, it follows that $t$ and $s$ are of different parity. 
Hence, since $|A(C)|=(t-2)(n-k+1)+n-k-s+1$, we get that  $C=\bigcup_{i=0}^{t-3} P_i\cup P\cup P'$ is an even cycle.  Let $c$ be a 2-colouring  of $C$ with $c(x_0)=1$. 

We are now ready to add ears to the cycle $C$ in order to get a spanning subdigraph of $C_n(\{1,k\})$. 
We consider the following three paths of $C_n(\{1,k\})$:
\begin{align*}
Q_1&=x_{2n-2k+1}\ri \ldots \ri x_{2n-2k+s+1},\\
Q_2&=x_{n-k+1}\ri\ldots \ri x_{n-k+s+1}, \text{ and}\\
Q_3&=x_1\ri \ldots \ri x_{n-k-1}.
\end{align*}

Observe  that $V(C_n(\{1,k\}))=V(C)\cup V(Q_1)\cup V(Q_2)\cup V(Q_3)$. 
We now sequentially add the following three ears to $C$:
\begin{align*}
E_1&=x_{3n-3k+1}\Ri Q_1\ri x_{2n-2k+s+2},\\
E_2&=x_{2n-2k+1}\Ri Q_2\ri x_{n-k+s+2}, \text{ and}\\
E_3&=x_{n-k+1}\Ri Q_3\ri x_{n-k}.
\end{align*}

Since
\begin{align*}
&d_C(x_{3n-3k+1},x_{2n-2k+s+2})=d_{P_{t-3}}(x_{3n-3k+1},x_{2n-2k+s+2})=s+2 \text{ and }|E_1|=s+2,\\
&d_{C\cup E_1}(x_{2n-2k+1},x_{n-k+s+2})=s+2 \text{ and } |E_2|=s+2,\\
&d_C(x_{n-k+1},x_{n-k})=d_{E_2\cup P}(x_{n-k+1},x_{n-k})=n-k \text{ and } |E_3|=n-k,
\end{align*}
the so-obtained subdigraph $C\cup E_1\cup E_2\cup E_3$ is a strong bipartite spanning subdigraph of $C_n(\{1,k\})$.
This completes this subcase.

\begin{figure}
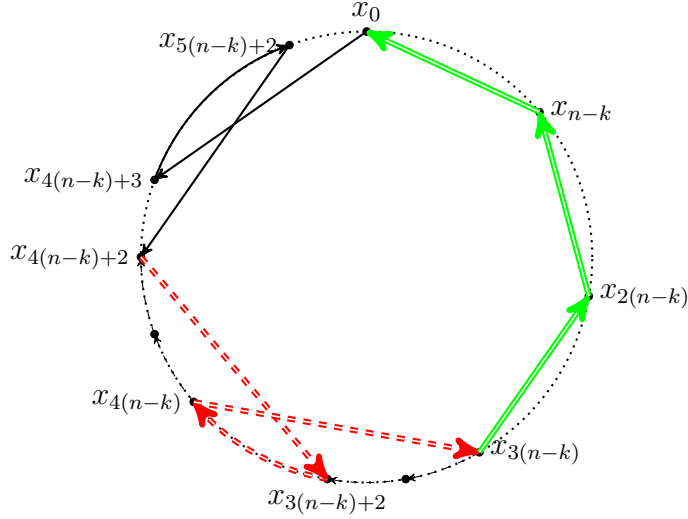

\begin{center}
\caseThreeFive
\end{center}
\caption{(Subcase 3.5) A sample digraph for $t=4$, together with the paths $C=P_0$ (black), $P$ (double dashed, red) and $P'$ (double, green).}
\label{fig:case_three_five}
\end{figure}

\medskip
\noindent {\bf Subcase 3.5.} {\it  $t\ge 3$ and $s= n-k-1.$}

In this case, $t$ must be even since $n$ is odd. We thus have $t\ge 4$. 
Consider the following paths (see Figure~\ref{fig:case_three_five}):
\begin{align*}
P_i &=x_{(t-i+1)(n-k)+t-i-1}\Ri x_{(t-i)(n-k)+t-i-1}\ri \ldots \ri x_{(t-i+1)(n-k)+t-i-2}\Ri x_{(t-i)(n-k)+t-i-2},\\
&\ \ \ \ \text{ for every } i\in\{0,\ldots , t-4\},\\
P &= x_{4n-4k+2}\Ri x_{3n-3k+2}\ri \ldots \ri x_{4n-4k}\Ri x_{3n-3k}, \text{ and}\\
P' &=x_{3n-3k}\Ri x_{2n-2k}\Ri x_{n-k}\Ri x_0.
\end{align*}

Let $C=\bigcup_{i=0}^{t-4}P_i\cup P\cup P'$. 
Since $|P_i|=n-k+1$, $|P|=n-k$ and$|P'|=3$, we get $|A(C)|=(t-3)(n-k+1)+ n-k+3$
and thus  $C$ is an even cycle. 

We now sequentially add the five following ears to $C$: 
\begin{align*}
E_1 &=x_{5n-5k+1}\Ri x_{4n-4k+1}\ri x_{4n-4k+2},\\
E_2 &=x_{4n-4k+1}\Ri x_{3n-3k+1}\ri x_{3n-3k+2},\\
E_3 &=x_{3n-3k+1}\Ri x_{2n-2k+1}\ri \ldots \ri x_{3n-3k-1}\ri x_{3n-3k},\\
E_4 &=x_{2n-2k+1}\Ri x_{n-k+1}\ri \ldots \ri x_{2n-2k-1}\ri x_{2n-2k}, \text{ and}\\
E_5 &=x_{n-k+1}\Ri x_{1}\ri \ldots \ri x_{n-k-1}\ri x_{n-k}.
\end{align*}

Note that the ends of the ear $E_1$ belong to $C$, while the ends of each ear $E_i$,
$2\le i\le 5$, belong to $C\cup E_1\cup \dots \cup E_{i-1}$. 

Moreover, since
\begin{align*}
&d_C(x_{5n-5k+1},x_{4n-4k+2})=d_{P_{t-4}}(x_{5n-5k+1},x_{4n-4k+2})=2 \text{ and } |E_1|=2,\\
&d_{C\cup E_1}(x_{4n-4k+1},x_{3n-3k+2})=2 \text{ and }  |E_2|=2,\\
&d_{C\cup E_1\cup E_2}(x_{3n-3k+1},x_{3n-3k})=n-k \text{ and }  |E_3|=n-k,\\
&d_{C\cup E_1\cup E_2\cup E_3}(x_{2n-2k+1},x_{n-k})=n-k \text{ and }  |E_4|=n-k,\\
&d_{C\cup E_1\cup E_2\cup E_3\cup E_4}(x_{n-k+1},x_{n-k})=n-k  \text{ and } |E_5|=n-k,
\end{align*}
the so-obtained subdigraph $C\cup E_1\cup E_2\cup E_3\cup E_4\cup E_5$ is a strong bipartite spanning subdigraph of $C_n(\{1,k\})$.
This completes this last subcase.

\medskip
This concludes the proof of Lemma~\ref{lem:odd}.
\end{proof}

As observed before, the proof of Theorem~\ref{thm:main} then directly follows from Corollary~\ref{cor:even-integer} and Lemma~\ref{lem:odd}.


\section{Proper-walk connection of digraphs} \label{sec:walk}

Each properly coloured path is also a properly coloured walk, so $\wc(D)\le \pc(D)$ for every digraph $D$. 
Therefore, similarly as the  proper connection number, the  proper-walk connection number of every digraph $D$ is upper bounded by~3. 
On the other hand, the only digraphs with directed proper-walk connection number~1 are the symmetric complete graphs $\overleftrightarrow{K_n}$. 

In this section, we give some sufficient conditions for a digraph $D$ to have proper-walk connection number at most~2.  
If $D$ has an Eulerian closed walk and an even number of arcs then, going along the Eulerian walk and  alternately assigning colours 1 and~2 to the arcs, we get a properly-walk connected arc-colouring of $D$. 
We thus have the following observation.

\begin{obs}\label{obs:A-even}
If $D$ is a strong digraph with $d^+(v)=d^-(v)$ for every vertex $v\in V(D)$  and $|A(D)|$ even, then $\wc(D)\le 2$.
\end{obs}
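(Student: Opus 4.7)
The plan is to make the sketch given in the paragraph preceding the observation fully precise. The essential ingredient is the classical characterisation of Eulerian digraphs: a digraph is Eulerian (i.e., admits a closed walk traversing every arc exactly once) if and only if it is connected (in the underlying sense) and satisfies $d^+(v)=d^-(v)$ for every vertex $v$. Since $D$ is strong, it is in particular connected, and the degree condition is assumed, so $D$ admits a closed Eulerian walk $W=a_1a_2\dots a_m$ with $m=|A(D)|$.

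Next I would define an arc-colouring $\lambda:A(D)\to\{1,2\}$ by $\lambda(a_i)=1$ if $i$ is odd and $\lambda(a_i)=2$ if $i$ is even. Two consecutive arcs $a_i,a_{i+1}$ along $W$ (for $1\le i\le m-1$) obviously receive different colours. The role of the assumption ``$|A(D)|$ even'' is precisely to guarantee that the wrap-around pair $a_m,a_1$ also gets different colours, so that $W$, viewed as a cyclic sequence, is properly coloured in the walk sense.

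Finally, given an ordered pair $(u,v)\in V(D)\times V(D)$, I would exhibit a properly coloured $uv$-walk as follows. Since $W$ uses every arc, it visits every vertex, so choose an index $i$ such that $a_i$ has tail $u$ (if $u=v$, one can take the trivial walk, or go once around $W$). Start following $W$ from $a_i$; because $W$ is closed and visits $v$, we can continue until we first reach $v$, possibly wrapping around through $a_m$ back to $a_1$. The resulting sub-walk is a sequence of consecutive arcs of $W$ (in the cyclic sense), hence consecutive arcs receive distinct colours, giving a properly coloured $uv$-walk.

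The only real step is recognising that Euler's theorem extends to digraphs under the stated hypotheses; everything else is a straightforward check of parities. The main point where the argument could fail is precisely the wrap-around pair $a_m,a_1$, and this is exactly why the parity hypothesis on $|A(D)|$ appears in the statement. This yields $\wc(D)\le 2$.
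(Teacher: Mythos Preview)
Your proposal is correct and follows exactly the approach the paper sketches in the paragraph immediately preceding the observation: take a closed Eulerian walk (which exists since $D$ is strong and $d^+(v)=d^-(v)$ for all $v$), colour its arcs alternately, and use the evenness of $|A(D)|$ to handle the wrap-around. There is nothing to add; your write-up simply makes the paper's one-sentence justification explicit.
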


If $|V(D)|$ is even and $d_D^+(v)=d_D^-(v)$ for every vertex $v\in V(D)$, then $|A(D)|$ is even, so we also have the following observation.

\begin{obs}\label{obs:V-even}
If $D$ is a strong digraph with $d^+(v)=d^-(v)$ for every vertex $v\in V(D)$  and $|V(D)|$ even, then $\wc(D)\le 2$.
\end{obs}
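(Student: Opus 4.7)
The natural plan is to derive Observation~\ref{obs:V-even} as an immediate corollary of Observation~\ref{obs:A-even}: the balance hypothesis $d^+(v)=d^-(v)$ is common to both statements, so only the extra hypothesis of Observation~\ref{obs:A-even}, that $|A(D)|$ be even, needs to be verified from the new hypothesis that $|V(D)|$ be even. Starting from the identity $2|A(D)|=\sum_{v\in V(D)}\bigl(d^+(v)+d^-(v)\bigr)=\sum_{v\in V(D)}2d^+(v)$, one obtains $|A(D)|=\sum_{v\in V(D)} d^+(v)$. The idea is then to combine $|V(D)|$ even with the balance condition to conclude that this sum is even, after which Observation~\ref{obs:A-even} immediately yields $\wc(D)\le 2$ via the alternating $2$-arc-colouring of an Eulerian closed walk of $D$ (which exists because $D$ is strong and balanced).

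The main obstacle is precisely the parity step, since the handshake argument on the underlying multigraph is silent here: every vertex automatically has even degree $d(v)=2d^+(v)$, so summing contributions only recovers $2|A(D)|\equiv 0\pmod{2}$, which is automatic and gives no information on the parity of $|A(D)|$ itself. A clean route to close the gap would be to split into cases. In the bipartite case, with parts $X$ and $Y$, the balance condition gives $\sum_{x\in X} d^+(x)=\sum_{x\in X} d^-(x)$, i.e., the number of arcs from $X$ to $Y$ equals the number of arcs from $Y$ to $X$, so $|A(D)|$ is even for free and Observation~\ref{obs:A-even} applies. In the non-bipartite case, $D$ contains an odd directed cycle $C'$, and the plan is to augment an Eulerian closed walk of $D$ by inserting a traversal of $C'$ at a common vertex to obtain a closed spanning walk of even length whose alternating $2$-arc-colouring induces the desired colouring on $D$; here the parity of $|V(D)|$ is what guarantees that such a spanning even-length walk can be built without creating a wrap-around colour clash. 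Either way, the conclusion $\wc(D)\le 2$ follows from the same alternating-colouring principle that underlies Observation~\ref{obs:A-even}.
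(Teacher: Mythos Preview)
Your instinct that the parity step is the real issue is correct, and your bipartite case is fine. The non-bipartite case, however, is not a proof: inserting an extra traversal of an odd cycle $C'$ into the Eulerian closed walk gives a closed walk of even length, but the alternating $2$-colouring along that walk colours the \emph{walk}, not $A(D)$ --- the arcs of $C'$ are traversed twice and will in general receive both colours, so no arc-colouring of $D$ is ``induced''. The clause ``the parity of $|V(D)|$ is what guarantees\ldots'' is asserted rather than argued and does not rescue this.

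In fact no argument can close the gap, because the observation is false. Take $V(D)=\{a,b,c,d\}$ and $A(D)=\{ab,bc,ca,cd,dc\}$. This digraph is strong, satisfies $d^+(v)=d^-(v)$ for every $v$, and has $|V(D)|=4$; yet $|A(D)|=5$ is odd, so the paper's one-line justification (that $|V(D)|$ even together with the balance condition forces $|A(D)|$ even) already fails here. Moreover $\wc(D)=3$. Indeed, in any $2$-arc-colouring: every walk out of $a$ begins $ab,bc$, so a proper $a\to c$ walk forces $ab$ and $bc$ to receive different colours; every walk into $b$ ends $ca,ab$, so a proper $c\to b$ walk forces $ca$ and $ab$ to differ; hence $bc$ and $ca$ receive the same colour. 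But every $b\to a$ walk has the form $bc,\ldots,ca$, and between these two arcs one can only insert digon loops $cd,dc$ or triangle loops $ca,ab,bc$; with $bc$ and $ca$ sharing a colour and only two colours available, a short case check shows that every such walk contains two consecutive arcs of the same colour. Thus both the paper's derivation and the statement itself are incorrect.
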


In the next theorems, we consider the case of Hamiltonian digraphs.

\begin{theorem}
If $D$ is a Hamiltonian digraph with $d_D^+(v)=d_D^-(v)$ for every vertex $v\in V(D)$, then $\wc(D)\le 2$.
\end{theorem}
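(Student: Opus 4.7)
My plan is to begin with the two cases already handled by the preceding observations and reduce the remaining case to Observation~\ref{obs:A-even}. By Observation~\ref{obs:V-even}, I may assume $n = |V(D)|$ is odd, and by Observation~\ref{obs:A-even}, I may further assume $|A(D)|$ is odd. Let $C = x_0 x_1 \dots x_{n-1} x_0$ be a Hamiltonian cycle of $D$, and set $D' = D \setminus A(C)$. Then $|A(D')| = |A(D)| - n$ is even, and the balance condition $d^+_{D'}(v) = d^-_{D'}(v)$ is inherited from $D$, so $D'$ is a balanced digraph which decomposes into arc-disjoint directed cycles.

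First, suppose $D'$ contains an odd directed cycle $C^*$. Then the spanning subdigraph $D \setminus A(C^*)$ still contains the Hamiltonian cycle $C$ (hence is strongly connected), remains balanced, and has $|A(D)| - |C^*|$ arcs, which is odd minus odd, hence even. Observation~\ref{obs:A-even} then produces a properly-walk connected 2-arc-colouring of $D \setminus A(C^*)$; extending this colouring arbitrarily to the arcs of $C^*$ gives a properly-walk connected 2-arc-colouring of $D$, and so $\wc(D) \le 2$.

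Otherwise $D'$ is bipartite with bipartition $(U, W)$. Since the Hamiltonian cycle $C$ has odd length, a simple parity argument (each crossing between $U$ and $W$ flips the side, and $C$ returns to its starting side after $n$ steps) forces $C$ to contain an odd, hence positive, number of arcs whose two endpoints lie in the same part; fix one such arc $x_i x_{i+1}$. I would then take a directed trail $Q$ in $D'$ from $x_i$ to $x_{i+1}$, whose length is necessarily even by the bipartiteness of $D'$, and substitute $Q$ for the arc $x_i x_{i+1}$ in $C$. This yields a closed spanning trail of even length $(n-1) + |Q|$; colouring its arcs alternately with the colours $1$ and $2$ and the remaining arcs of $D$ arbitrarily gives a properly-walk connected 2-arc-colouring of $D$, because any ordered pair of vertices is joined by a properly coloured directed walk following this trail.

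The main obstacle is the bipartite subcase: the trail $Q$ inside $D'$ may fail to exist if $x_i$ and $x_{i+1}$ lie in distinct components of $D'$, or if one of them is incident only to arcs of $C$. I would address this by choosing the within-part arc of $C$ carefully among the odd number of candidates so that both of its endpoints have positive $D'$-degree and lie in a common strong component of $D'$, and, failing that, by locally stitching together several within-part arcs of $C$ with short walks of $D'$ to build an even-length closed spanning trail directly. In the extreme case where $D'$ is very sparse, the balance condition severely restricts the structure of $D$, and I would complete the proof with an ad hoc construction.
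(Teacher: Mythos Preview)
Your argument when $D' = D \setminus A(C)$ contains an odd directed cycle $C^*$ is correct, and your device---delete $A(C^*)$ and apply Observation~\ref{obs:A-even} to the resulting strong, balanced digraph with an even number of arcs---is a clean alternative to the paper's approach. The paper instead keeps only $C \cup C_1$ for one odd cycle $C_1 \subseteq D'$, picks a vertex $v \in V(C_1)$, and $2$-colours each of $C$ and $C_1$ alternately so that the unique ``defect'' (pair of consecutive same-coloured arcs) of each cycle sits at $v$, in complementary colours; it then checks directly that $C \cup C_1$ is properly-walk connected.

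The bipartite-$D'$ subcase, however, is not a technical obstacle that a smarter choice of within-part arc or an ad hoc patch will remove: the statement itself fails there. For $D = C_n$ with $n$ odd one has $d_D^+\equiv d_D^-\equiv 1$ and $D' = \emptyset$ (trivially bipartite), yet $\wc(C_n)=3$, since in any $2$-arc-colouring some vertex has its unique in-arc and out-arc sharing a colour, blocking every walk through it. So your proposed completion cannot succeed. The paper's proof has exactly the same gap: its assertion that the oddness of $|V(D')|$ forces an odd cycle in the arc-decomposition of $D'$ is unjustified (the cycle lengths sum to $|A(D')|$, whose parity is unrelated to $|V(D')|$), and indeed false when $D'$ is empty or a single $2$-cycle. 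What both arguments actually establish is the weaker, correct statement that $\wc(D)\le 2$ whenever $D\setminus A(C)$ contains an odd directed cycle for some Hamiltonian cycle $C$ of $D$.
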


\begin{proof}
Let $C$ be a Hamiltonian cycle of $D$. 
If $|V(D)|$ is even, then $C$ is a strong spanning bipartite subdigraph of $D$ and so $\wc(D)\le 2$ by Corollary~\ref{cor:bipartite}. 

Suppose that $|V(D)|$ is odd and let $D'=D-A(C)$. 
Note that $d_{D'}^+(v)=d_{D'}^-(v)$ for every vertex $v\in V(D')=V(D)$. 
However, $D'$ may be not strong since, for instance, it could be a vertex-disjoint sum of strong digraphs. 
The condition $d_{D'}^+(v)=d_{D'}^-(v)$ for every vertex $v$ implies that there exists a decomposition of $A(D')$ into arc-disjoint cycles, i.e., $D'=C_1\cup \ldots \cup C_k$, with $A(C_i)\cap A(C_j)=\emptyset $ for every $i,j$, $1\le i<j\le k$. 

Since $V(D')$ is odd, at least one cycle is odd, say $C_1$. Since $C$ is a Hamiltonian cycle, the subdigraph $C\cup C_1$ is strong. Let $v$ be any vertex of $C_1$. 
We colour the arcs of $C$ and of $C_1$ alternately with colours 1 and~2, except the arcs incident with $v$. In $C$ we colour the two arcs incident with $v$  with~1, and in $C_1$  we colour the two arcs incident with $v$ with~2. 
It is not difficult to check that the so-obtained colouring is a properly-walk connected arc-colouring of $C\cup C_1$, which gives $\wc(D)\le\wc(C\cup C_1)=2$.
\end{proof}

Recall that the length of a chord $x_px_q$ in a cycle $C=x_0\dots x_{n-1}x_0$
is $|C[x_p,x_q]|$. 

\begin{theorem}\label{thm:short-chords}
Let $D$ be a Hamiltonian digraph with 
$d_D^+(v)\ge 2$ and 
$d_D^-(v)\ge 2$ for every vertex $v\in V(D)$, and $C$ be a Hamiltonian cycle of $D$. If every chord of $C$ has length at most $\left\lceil  |V(D)|/2\right\rceil$, then $\wc(D)= 2$. 
\end{theorem}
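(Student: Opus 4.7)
I would first dispatch $\wc(D)\ge 2$: a digraph with proper-walk connection number $1$ must be a complete biorientation $\overleftrightarrow{K_n}$, but for $n\ge 4$ such a digraph has a chord of length $n-1>\lceil n/2\rceil$, violating the hypothesis. So it suffices to prove $\wc(D)\le 2$. Write $C=x_0x_1\cdots x_{n-1}x_0$ and set $n=|V(D)|$. For $n$ even, the alternating $2$-colouring $\lambda(x_ix_{i+1})=1$ for $i$ even and $\lambda(x_ix_{i+1})=2$ for $i$ odd makes $C$ (and hence $D$) properly connected, so $\wc(D)\le\pc(D)\le 2$. The remaining case is $n$ odd.

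For odd $n$ I keep the same formula on $C$. Since $n-1$ is even, both arcs $x_{n-1}x_0$ and $x_0x_1$ receive colour~$1$, so $x_0$ is the unique \emph{defect} vertex along $C$. The forward cycle path from $x_i$ to $x_j$ is properly coloured whenever $i<j$, so it remains to supply a proper walk from $x_i$ to $x_j$ for each pair with $i>j$; every such walk must wrap around $C$ and hence cross $x_0$, which we need to bypass.

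The chord-length bound is the key structural ingredient: every out-chord of $x_{n-1}$ has head in $\{x_1,\ldots,x_{(n-1)/2}\}$, and every in-chord of $x_1$ has tail in $\{x_{(n+1)/2},\ldots,x_{n-1}\}$. Combined with $d^+_D(x_{n-1})\ge 2$ and $d^-_D(x_1)\ge 2$, this gives at least one ``short forward'' out-chord $x_{n-1}x_r$ at $x_{n-1}$ and at least one ``long backward'' in-chord $x_px_1$ at $x_1$. I would colour $x_{n-1}x_r$ with the colour opposite to $\lambda(x_{n-2}x_{n-1})$, colour $x_px_1$ with the colour opposite to $\lambda(x_1x_2)$, and colour every remaining chord with colour~$1$. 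A walk from $x_i$ to $x_j$ with $i>j$ is then either
\[
x_i\ri\cdots\ri x_{n-1}\Ri x_r\ri\cdots\ri x_j\quad(\text{usable when }r\le j),
\]
or
\[
x_i\ri\cdots\ri x_p\Ri x_1\ri\cdots\ri x_j\quad(\text{usable when }p\ge i).
\]
Because $r\le (n-1)/2$ and $p\ge(n+1)/2$, together these cover every pair with $i>j$; the remaining case $j=0$ is handled by the $C$-path itself.

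The main obstacle is parity alignment. In each detour the chord's colour must differ from both the preceding and the following $C$-arc, and whether this holds depends on the parities of $r$ and $p$. The hypotheses do not pin these parities down, so one has to argue case by case. The freedom comes from the global degree condition: $d^+\ge 2$ and $d^-\ge 2$ at \emph{every} vertex, so when the initially chosen chord $x_{n-1}x_r$ or $x_px_1$ fails the parity check, one can substitute a chord at a neighbouring vertex on $C$ (for instance an out-chord of $x_{n-2}$ or an in-chord of $x_2$) or, as a last resort, shift the defect vertex away from $x_0$ by a cyclic rotation of the alternating colouring. Verifying that in every parity case the assembled $2$-arc-colouring realises a proper walk between every ordered pair is the technical heart of the argument.
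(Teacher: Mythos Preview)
Your outline contains a genuine gap, and the hand-wave at the end does not close it.

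First a minor point: your coverage claim is wrong as stated. With $r\le(n-1)/2$ and $p\ge(n+1)/2$ you can still have pairs $(i,j)$ with $j<r$ and $i>p$ simultaneously (e.g.\ $n=9$, $r=3$, $p=6$, $i=8$, $j=2$), and then neither of your two routes reaches $x_j$ from $x_i$. This is patchable---since $r\le p$, the concatenated walk $x_i\ri\cdots\ri x_{n-1}\Ri x_r\ri\cdots\ri x_p\Ri x_1\ri\cdots\ri x_j$ does the job---but you did not say this.

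The serious gap is the parity issue, which you flag but do not resolve. With your alternating colouring and defect at $x_0$, the detour through $x_{n-1}\Ri x_r$ is properly coloured at both ends only when the chord $x_{n-1}x_r$ is \emph{even}, and the detour through $x_p\Ri x_1$ only when $x_px_1$ is \emph{odd}. Nothing in the hypothesis guarantees either. Your proposed fixes do not work in general: a cyclic rotation only relabels vertices and cannot create an even chord where none exists; and ``substituting a chord at a neighbouring vertex'' gives a chord of the opposite type at that vertex, but then you must reconnect to it properly, which recreates the same parity obstruction one step over. In the extreme case where \emph{every} chord of $C$ is odd---perfectly consistent with the length bound and the degree conditions---no single bypass of the defect can be made proper at both junctions, regardless of where you place the defect or which vertex's chord you use.

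The paper's proof handles exactly this by splitting into two cases. If some even chord exists, one relabels so that a \emph{shortest} even chord is $x_0\Ri x_p$; colouring it with~$1$ yields a properly coloured even cycle on $\{x_0,x_p,x_{p+1},\dots,x_{n-1}\}$, and the vertices $x_2,\dots,x_{p-1}$ are then reached by colouring, one by one, an in-chord at each (the length bound forces these chords to have tails in the already-handled region). If \emph{all} chords are odd, one takes a \emph{longest} odd chord $x_0\Ri x_p$ and then iterates: follow an out-chord from $x_p$ to $x_{p_1}$, from $x_{p_1}$ to $x_{p_2}$, and so on, colouring alternately, until the chord-path wraps past $x_0$. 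Maximality of the first chord forces the landing point $x_{p_j}$ to lie strictly before $x_p$, and the fact that the final chord straddles $x_0$ is precisely what flips the parity so that the walk rejoins $C$ properly. This iterated chord-jumping, together with the extremal choice of the initial chord, is the missing idea in your proposal.
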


\begin{proof}
Since every chord of $C$ has length at most $\left\lceil  |V(D)|/2\right\rceil$, $D\neq\overleftrightarrow{K_n}$ and so $\wc(D)\ge  2$. 
If $|V(D)|$ is even, then $C$ is a strong spanning bipartite subdigraph of $D$ and so $\wc(D)\le 2$ by Corollary~\ref{cor:bipartite}. 
We thus assume that $|V(D)|$ is odd. We will prove that $D$ contains a spanning subdigraph $D'$ with $\wc(D')=2$. To this aim, we will choose some arcs from $A(D)$ and colour them with two colours in such a way they form a properly connected spanning subdigraph of $D$.  

Let $C=x_0x_1\ldots x_{n-1}x_0$ and $\vf $ be a 2-colouring of the arcs of $C$ such that $\vf(x_0x_1)=\vf(x_1x_2)=1$, $\vf(x_{n-1}x_0)=2$, $\vf(x_kx_{k+1})=1$  for $k$ odd, and  $\vf(x_kx_{k+1})=2$  for $k$ even, for every $k\in \{2,\ldots ,n-2\}$. 
Since the arcs $x_0x_1$ and $x_1x_2$ have the same colour, $(C,\vf)$ is not properly connected yet. 

We consider two cases.

\begin{figure}
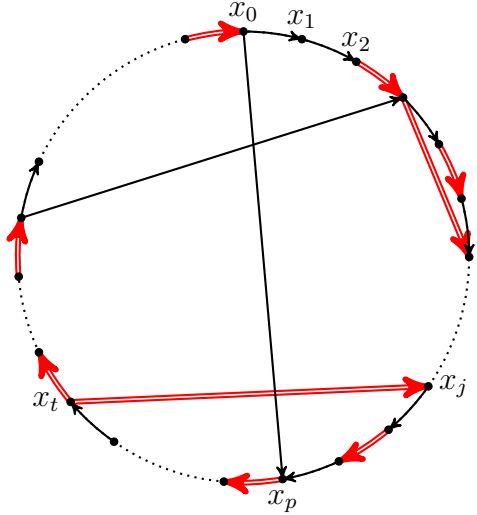

\begin{center}
\sectionThreeOne
\end{center}
\caption{(Theorem~\ref{thm:short-chords}, Case 1) The Hamiltonian digraph $D$ with its arc-colouring; $x_0x_p$ is a shortest even chord of $C$, and $x_tx_j$ an even chord which may not exist.}
\label{fig:section_three_one}
\end{figure}

\medskip
\noindent {\bf Case  1.} {\it $C$ has an even chord.}

Renaming vertices if  necessary, we assume that $x_0x_p$ is an even chord of $C$ with minimum length (see Figure~\ref{fig:section_three_one}). 
We colour $x_0x_p$ with~1, so that $x_0x_px_{p+1}\ldots x_{n-1}$ is a properly connected subdigraph of $D$ (an even cycle).  
Moreover, for every vertex $x_i$ in $\{2,\dots,x_{p-1}\}$, there is a properly connected path $x_ix_0$ whose last arc is $x_{n-1}x_0$.

In order to get a properly-walk connected spanning subdigraph of $D$, it is thus enough to colour some chords of $C$ in such a way that there is a properly coloured $x_0x_i$-walk starting with the arc $x_0x_p$, for every vertex $x_i$ in $\{x_2,\dots,x_{p-1}\}$.

Since $|C[x_0,x_p]|$ is minimum and every chord of $C$ has length at most $\left\lceil  |V(D)|/2\right\rceil$, it follows that every even chord whose head is in $\{x_2,\ldots , x_{p-1}\}$ has its tail in $\{x_{p+2},\ldots ,x_{n-1}\}$. 
Furthermore, since $d_D^-(v)\ge 2$ for every $v\in V(D)$, every vertex in $\{x_2,\ldots, x_{p-1}\}$ is the head of a chord. 

Let $j$ be the smallest integer in $\{x_2,\ldots, x_{p}\}$ such that $x_j$
is the head of an even chord (we may have $j=p$).
Note that $\vf(x_{t-1}x_t)=\vf(x_jx_{j+1})$. 
If $j<p$, we colour $x_tx_j$ with the colour different from $\vf(x_{t-1}x_t)$, so that we now have a properly coloured $x_0x_i$-walk for every vertex $x_i\in \{x_j,\ldots, x_{n-1}\}$. 

All vertices in $\{x_2,\ldots , x_{j-1}\}$ are not the head of an even chord, so they are the head of at least one odd chord.
We claim that, one by one, we can colour one such odd chord for every vertex $x_i$ in $\{x_2,\ldots , x_{j-1}\}$,  in such a way that there will be a properly coloured $x_0x_i$-walk (starting with the arc $x_0x_p$), which will allow us to conclude this case. 

Let $x_sx_2$ be an odd chord. Since $|C[x_s,x_2]|\le (n-1)/2$, we have $p\le s\le n-1$ and, since $x_sx_2$ is odd, $\vf(x_{s-1}x_s)=1$. 
We then colour $x_sx_2$ with~2, so that the walk $x_0x_p\ldots x_sx_2$ is properly coloured. 

Assume now that for every $i\in \{2,\ldots, \ell-1\}$, $\ell < j-1$, we have found a properly coloured $x_0x_i$-walk starting with $x_0x_p$ and ending with an odd chord. 
Let $x_qx_{\ell}$ be an odd chord. Since $|C[x_q,x_{\ell}]|\le (n-1)/2$, we have either $p+1\le q\le n-1$ or $0\le q \le \ell-3$. 
If $p+1\le q\le n-1$, then we colour $x_qx_{\ell}$ with the colour different from $\vf(x_{q-1}x_q)$, so that $x_0x_p\ldots x_qx_{\ell}$ is a properly coloured walk. 
Otherwise,
if $q=0$, then we colour $x_qx_{\ell}$ with~1, 
if $q=1$, then we colour $x_qx_{\ell}$ with~2, 
and if $2\le q \le \ell-3$, then we colour $x_qx_{\ell}$ with the colour different from the colour of the odd chord with head in $x_q$. 
Let $P$ be a properly coloured $x_0x_q$-walk starting with $x_0x_p$ and ending with a coloured chord. Then $Px_qx_{\ell}$ is the walk we are looking for.
This completes this case.

\begin{figure}
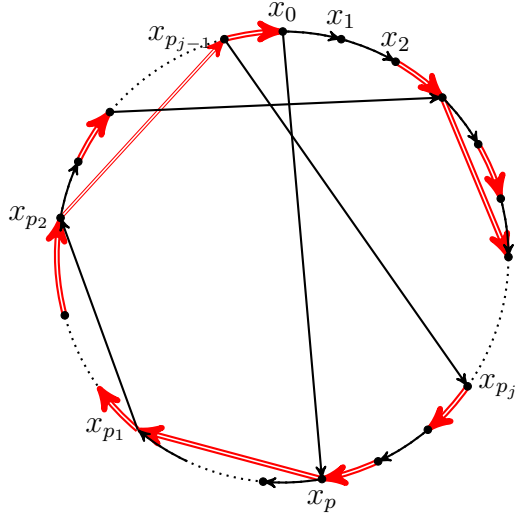

\begin{center}
\sectionThreeTwo
\end{center}
\caption{(Theorem~\ref{thm:short-chords}, Case 2) The Hamiltonian digraph $D$ with its arc-colouring; $x_0x_p$ is a longest odd chord of $C$.}
\label{fig:section_three_two}
\end{figure}

\medskip
\noindent {\bf Case  2.} {\it Every chord in $C$ is odd.}

Renaming vertices if  necessary, we assume that $x_0x_p$ is an odd chord of $C$ with  maximum length (see Figure~\ref{fig:section_three_two}). We colour $x_0x_p$ with 1. 
Next, we choose a path containing only odd chords, $x_p\Ri x_{p_1}\Ri x_{p_2}\Ri \ldots \Ri x_{p_{j-1}}\Ri x_{p_j}$,  until we ``jump over  $x_0$'' (i.e., $p_{j-1}> p_j$ and $p_k < p_{k+1}$ for every $k<p$). 
Since $x_0x_p$ has  maximum length, we have $p_j<p$ (since $n$ is odd, we cannot have $x_{j-1}=x_0$). 
We colour the chords of this path alternately, starting with colour~1 on $x_0x_p$.
Observe that since all the chords are odd,  the colour of $x_{p_i}\Ri x_{p_{i+1}}$ is the same as the colour of $x_{p_{i+1}}\ri x_{p_{i+1}+1}$ for every $i\in \{1,\ldots j-2\}$. 
However, the colour of $x_{p_{j-1}}\Ri x_{p_j}$ is different from the colour of $x_{p_j}\ri x_{p_j+1}$. Thus, the closed walk 
$$x_0\Ri x_p\Ri x_{p_1}\Ri x_{p_2} \Ri \ldots \Ri x_{p_j} \ri x_{p_j+1}\ri \ldots \ri x_{n-1}\ri x_0$$
is properly coloured.

Hence, for every vertex $x_i\in \{x_2,\ldots ,x_{p-1}\}$, there is a  properly coloured $x_ix_0$-walk ending with $x_{n-1}x_0$.
In order to get a properly-walk connected spanning subdigraph of $D$, it is thus enough to colour some chords of $C$ in such a way that there is a properly coloured $x_0x_i$-walk starting with the arc $x_0x_p$, for every vertex $x_i$ in $\{x_2,\dots,x_{j-1}\}$.

Similarly as in {\bf Case 1}, 
we claim that, one by one, we can colour, for every vertex $x_i$ in $\{x_2,\ldots , x_{p_j-1}\}$ one chord with head at $x_i$,  in such a way that there will be a properly coloured $x_0x_i$-walk (starting with $x_0x_p$ and ending with the chosen chord), which will allow us to conclude this case.

First observe that, for every vertex $x_i\in \{x_{p+1},\ldots ,x_{n-1}\}$, there is a properly coloured $x_0x_i$-walk starting with $x_0x_p$ and ending with an arc in $A(C)$. 
Let $x_sx_2$ be an odd chord. 
Since $|C[x_s,x_2]|\le (n-1)/2$, we have $p+1\le s\le n-1$. Since $x_sx_2$ is odd, $\vf(x_{s-1}x_s)=1$. 
We then colour $x_sx_2$ with~2, so that we obtain the properly coloured walk $P_2x_sx_2$, where $P_2$ is the properly coloured $x_0x_s$-walk starting with $x_0x_p$ and ending with an arc in $A(C)$. 

Assume now that for every $i\in \{2,\ldots, \ell-1\}$, $\ell < j-1$, we have found a properly coloured $x_0x_i$-walk starting with $x_0x_p$ and ending with an odd chord. 
Let $x_qx_{\ell}$ be an odd chord. Since $|C[x_q,x_{\ell}]|\le (n-1)/2$, we have either $p+1\le q\le n-1$ or $0\le q \le \ell-3$. 
If $p+1\le q\le n-1$, then we colour $x_qx_{\ell}$ with the colour different from the colour of $x_{q-1}x_q$. Thus $P_{\ell}x_qx_{\ell}$ (where $P_{\ell}$  is the properly coloured $x_0x_q$-walk starting with $x_0x_p$   and  ending with an arc in $A(C)$) is a properly coloured walk. 
Otherwise,
if $q=0$, then we colour $x_qx_{\ell}$ with 1,
if $q=1$, then we colour $x_qx_{\ell}$ with 2, and
ff $2\le q \le \ell-3$, then we colour $x_qx_{\ell}$ with the colour different from the colour of the odd chord with head $x_q$. 
Let $P$ be a properly coloured $x_0x_q$-walk starting with $x_0x_p$ and ending with a coloured chord. Then $Px_qx_{\ell}$ is a path that we  are looking for.

This completes the proof of Theorem~\ref{thm:short-chords}.
\end{proof}

\section{Concluding remarks}\label{sec:remarks}

In Section~\ref{sec:walk} we have proved that   if $C$ is a Hamiltonian cycle of a digraph $D$ satisfying $d_D^+(v)\ge 2$ and $d_D^-(v)\ge 2$ for every vertex $v$ of $D$, then $\wc(D)\le 2$, provided that every chord of $C$ has length at most $\left\lceil  |V(D)|/2\right\rceil$. We expect that the condition on the length of chords can be removed. We thus propose the following conjecture.

\begin{conjecture}
If $D$ is a  Hamiltonian digraph  such that 
$d_D^+(v)\ge 2$ and 
$d_D^-(v)\ge 2$ for every vertex $v\in V(D)$, then $\wc(D)\le 2$.
\end{conjecture}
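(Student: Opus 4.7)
The plan is to extend the framework of Theorem~\ref{thm:short-chords}, removing the hypothesis that every chord of the Hamiltonian cycle has length at most $\lceil |V(D)|/2\rceil$. The case $n:=|V(D)|$ even is immediate: the Hamiltonian cycle $C$ is a bipartite strong spanning subdigraph of $D$, so Corollary~\ref{cor:bipartite} yields $\wc(D)\le\pc(D)\le 2$. Assume henceforth $n$ odd, and let $C=x_0x_1\dots x_{n-1}x_0$. Colour the arcs of $C$ with the 2-colouring $\vf$ introduced in the proof of Theorem~\ref{thm:short-chords}: its only defect occurs at $x_1$, where $\vf(x_0x_1)=\vf(x_1x_2)=1$. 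Consequently, $C$ alone already provides a properly coloured walk for every ordered pair whose $C$-walk avoids $x_1$ as an internal vertex; moreover, for every $i\in\{1,\dots,n-1\}$, the $C$-walk $x_i\to\dots\to x_{n-1}\to x_0$ is properly coloured and ends with the colour-$2$ arc $x_{n-1}x_0$. By concatenating such an $x_ix_0$-walk with a properly coloured $x_0x_j$-walk whose first arc has colour~$1$, every ordered pair of distinct vertices receives a properly coloured walk. The task thus reduces to constructing, for each $i\in\{2,\dots,n-1\}$, a properly coloured $x_0x_i$-walk whose first arc has colour~$1$.

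Following Theorem~\ref{thm:short-chords}, the analysis splits into two cases. In Case~1, where $C$ has at least one even chord, I rename the vertices so that a shortest even chord has the form $x_0x_p$ and colour $x_0x_p$ with colour~$1$; the walk $x_0\Ri x_p\ri x_{p+1}\ri\dots\ri x_i$ is then properly coloured for every $i\in\{p,\dots,n-1\}$. For the remaining indices $i\in\{2,\dots,p-1\}$, I maintain a set $R$ of \emph{reached} vertices, initially $R=\{x_0,x_1,x_p,\dots,x_{n-1}\}$, and for every $x_a\in R$ a recorded properly coloured $x_0x_a$-walk together with the colour of its last arc; I then iteratively enlarge $R$ by picking some $x_i\in\{2,\dots,p-1\}\setminus R$ with an in-chord $x_ax_i$ from some $x_a\in R$, colouring this chord with the colour opposite to the last arc of the walk to $x_a$, and adding $x_i$ to $R$. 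Case~2, where all chords of $C$ are odd, proceeds analogously: one starts from a longest odd chord $x_0x_p$ coloured~$1$, builds a chain of odd chords that bypasses $x_0$ exactly as in the proof of Theorem~\ref{thm:short-chords} to reach a contiguous block of vertices past $x_p$, and then applies the same reach-expanding procedure to the remaining indices.

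The hard part is showing that this iterative procedure always terminates with $R=V(D)$. Equivalently, one must rule out the existence of a \emph{trap}: a nonempty set $T$ of indices in $\{2,\dots,p-1\}$ (in Case~1; analogously in Case~2) such that no vertex of $T$ is ever reachable from $x_0$ by a properly coloured walk using the already-coloured arcs. In Theorem~\ref{thm:short-chords}, the bound on chord lengths immediately precludes traps, because it forces every in-chord into $\{x_2,\dots,x_{p-1}\}$ to originate in $\{x_{p+2},\dots,x_{n-1}\}\subseteq R$. Without this bound, all in-chords of a potential trap $T$ could originate inside $T$, so the naive one-chord-at-a-time procedure can stall. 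The key missing ingredient is to exploit the two hypotheses $d_D^+(v)\ge 2$ and $d_D^-(v)\ge 2$ together: since every vertex of $T$ has at least one chord out-neighbour in $D$, repeatedly following such out-chords alongside cycle arcs must eventually produce a walk that leaves $T$ and later re-enters it from outside, and with a coherent global choice of the chord colours this should yield the missing properly coloured walk from $x_0$ into $T$. Making this precise---most likely via a structural analysis of the chord subdigraph of $D$, or via a global (rather than incremental) chord-colouring scheme---is the principal technical obstacle.
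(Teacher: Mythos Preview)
The statement you are addressing is presented in the paper as an open \emph{conjecture} (Conjecture~1 in Section~\ref{sec:remarks}); the authors do not give a proof, only the remark that they ``expect that the condition on the length of chords can be removed.'' There is therefore no paper proof to compare your proposal against.

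Your outline correctly identifies both the natural line of attack---extending the argument of Theorem~\ref{thm:short-chords}---and the exact point at which that argument breaks: without the chord-length bound, the sequential reach-expanding procedure can stall on a nonempty set $T\subseteq\{x_2,\dots,x_{p-1}\}$ all of whose in-chords originate inside $T$. You are also candid that what follows is not a proof but a hope.

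That hope, however, is not obviously well-founded. Your proposed remedy is to exploit out-chords from $T$, but an out-chord $x_ax_b$ with $x_a\in T$ is an arc \emph{leaving} $T$; colouring it produces a properly coloured walk from $x_a$ to $x_b$, not one \emph{into} $T$. If every in-chord of $T$ has its tail in $T$, then the only arcs entering $T$ from outside are cycle arcs $x_{i-1}x_i$ with $x_{i-1}\notin T$, $x_i\in T$, and those already carry the fixed colour $\vf(x_{i-1}x_i)$. To extend across such an arc you would need the recorded walk to $x_{i-1}$ to end in the opposite colour, and your scheme provides no mechanism to arrange this. The vague suggestion that out-chords ``must eventually produce a walk that leaves $T$ and later re-enters it from outside'' does not address this parity obstruction. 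A genuine proof would almost certainly require either a different placement of the colour defect on $C$ (chosen relative to the global chord structure rather than fixed in advance), a non-greedy global assignment of chord colours, or an altogether different structural argument. As it stands, your proposal is a faithful diagnosis of the difficulty but not a resolution of it, and the conjecture remains open.
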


In Section~\ref{sec:path}, we gave some necessary conditions for a Hamiltonian digraph to have  proper connection number at most~2. 
In fact, we proved  stronger results. Namely, we proved that such a  digraph has a spanning strong bipartite subdigraph. 
Bang-Jensen {\it et al.} studied in \cite{BaBeHaYe18} the existence, in a given digraph $D$, of a spanning bipartite subdigraph with some special properties.
A {\it $2$-partition} of a digraph $D$ is a partition of its set of vertices in two parts.
If $(V_1,V_2)$ is a 2-partition of a digraph $D$, then the bipartite digraph induced by the set of arcs having one end in each part is denoted by $B_D(V_1,V_2)$. 
A 2-partition $(V_1,V_2)$ of $D$ is {\it strong} if $B_D(V_1,V_2)$ is strong. 
The authors of~\cite{BaBeHaYe18}, Bang-Jensen {\it et al.}  studied the complexity of determining whether 2-partitions with some restrictions on the degree of vertices,
or strong 2-partitions, exist in digraphs. 
Among others results, they proved the following theorem.

\begin{theorem}\label{thm:two-partition}
It is NP-complete to decide whether a strong digraph has a strong $2$-partition.
\end{theorem}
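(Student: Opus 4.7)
The plan is to show membership in NP and then to establish hardness by reduction from a suitable NP-complete source problem, working with the interpretation that a 2-partition $(V_1,V_2)$ is strong when both induced subdigraphs $D[V_1]$ and $D[V_2]$ are strongly connected. Membership in NP is immediate: a partition $(V_1,V_2)$ serves as a polynomial-size certificate, and the strong connectivity of each of $D[V_1]$ and $D[V_2]$ is verified by Tarjan's algorithm in linear time $O(|V(D)|+|A(D)|)$.

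For NP-hardness I would reduce from $3$-SAT. Given a $3$-CNF formula $\varphi$ with variables $x_1,\ldots,x_n$ and clauses $C_1,\ldots,C_m$, the plan is to construct, in polynomial time, a strong digraph $D_\varphi$ whose strong 2-partitions correspond (up to swapping $V_1$ and $V_2$) to satisfying assignments of $\varphi$. The construction would use three families of gadgets. For each variable $x_i$, a variable gadget consisting of two disjoint strongly connected subdigraphs $T_i$ and $F_i$, linked by a short forcing structure of arcs so that in any strong 2-partition exactly one of $T_i,F_i$ lies entirely in $V_1$ and the other entirely in $V_2$, thereby encoding the truth value of $x_i$. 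For each clause $C_j=\ell_{j,1}\vee\ell_{j,2}\vee\ell_{j,3}$, a small strongly connected gadget $Q_j$ attached by arcs to the three literal gadgets it depends on, designed so that $Q_j$ can sit inside a strongly connected part of the partition if and only if at least one of $\ell_{j,1},\ell_{j,2},\ell_{j,3}$ is true. Finally, a sparse backbone of arcs making $D_\varphi$ itself strongly connected (as required by the hypothesis) while being chosen so as not to create unintended strong 2-partitions.

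The forward direction of correctness takes a satisfying assignment, places $T_i$ in $V_1$ and $F_i$ in $V_2$ whenever $x_i$ is true (and vice versa otherwise), and distributes each $Q_j$ on the side of some satisfied literal; one then verifies that both $D_\varphi[V_1]$ and $D_\varphi[V_2]$ are strongly connected, using the internal cycles of the placed gadgets together with the backbone arcs. Conversely, starting from any strong 2-partition of $D_\varphi$, the rigidity of the variable gadgets yields a truth assignment, and the strong connectivity of each induced part forces each $Q_j$ to be matched with a satisfying literal on its own side, so $\varphi$ is satisfiable.

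The main obstacle will be the gadget design itself. One must ensure simultaneously that (i) each variable gadget is rigid enough that every strong 2-partition of the local subdigraph splits $T_i$ from $F_i$ in one of only two symmetric ways; (ii) the clause gadget's strong connectivity within its containing part coincides precisely with the clause being satisfied by the assignment encoded by the variable gadgets; and (iii) the backbone arcs that make $D_\varphi$ globally strong do not inadvertently provide alternative paths that allow a ``cheating'' strong 2-partition bypassing the clause constraints. Balancing rigidity against global strong connectivity, and verifying the absence of spurious partitions, is the delicate part of the argument; the rest of the proof reduces to routine verification.
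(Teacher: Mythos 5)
There is a genuine gap, and it begins with the definition. In this paper a $2$-partition $(V_1,V_2)$ of $D$ is \emph{strong} when the bipartite digraph $B_D(V_1,V_2)$ --- the digraph induced by the arcs having one end in each part --- is strongly connected. You instead announce that you will work ``with the interpretation that a $2$-partition $(V_1,V_2)$ is strong when both induced subdigraphs $D[V_1]$ and $D[V_2]$ are strongly connected.'' That is a different decision problem (it concerns the arcs \emph{inside} the parts rather than the arcs \emph{between} them), so even a fully worked-out reduction along your lines would establish NP-completeness of the wrong problem. Note that the crossing-arcs version is the one studied by Bang-Jensen, Bessy, Havet and Yeo, from whose paper this theorem is quoted; the present paper does not reprove it, so the citation is the ``proof'' you are being compared against, but the definitional mismatch is fatal independently of that.

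Beyond the definition, the proposal is a plan rather than a proof. The NP-membership part is fine (and would remain fine for the correct definition: one checks strong connectivity of $B_D(V_1,V_2)$ in linear time). But the hardness part consists of a description of what the variable gadgets, clause gadgets and backbone \emph{should} accomplish, with no actual construction, and you yourself flag that the rigidity of the variable gadgets, the equivalence between clause satisfaction and local strong connectivity, and the absence of ``cheating'' partitions created by the backbone are all unresolved. In a reduction-based NP-hardness argument these verifications \emph{are} the proof; until concrete gadgets are exhibited and both directions of the correspondence are checked against them, nothing has been established. So the attempt has two independent defects: it targets a different problem, and even for that problem it supplies only a blueprint.
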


From the proofs of Theorem~\ref{thm:even_tail} and Corollary~\ref{cor:even_head}, and the observation that a Hamiltonian digraph of even order has a strong 2-partition, the following result follows.

\begin{theorem}\label{thm:chords_tail}
Let $D$ be a Hamiltonian digraph  and $C$ be a Hamiltonian cycle of $D$. 
If for every $v\in V(D)$ there is an even chord of $C$ with tail $v$, 
then $D$ has a strong $2$-partition.
\end{theorem}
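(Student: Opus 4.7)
The plan is to reduce the statement to producing a spanning bipartite subdigraph of $D$ that is strong. Indeed, if such a subdigraph $D^*$ has vertex bipartition $(V_1,V_2)$, then every arc of $D^*$ joins $V_1$ and $V_2$, so $B_D(V_1,V_2) \supseteq D^*$ is strong, and $(V_1,V_2)$ is a strong $2$-partition of $D$. Thus the task becomes: find such a spanning bipartite strong subdigraph.

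If $|V(D)|$ is even, the Hamiltonian cycle $C=x_0x_1\dots x_{n-1}x_0$ has even length, hence is itself a strong bipartite spanning subdigraph with parts $V_1=\{x_{2i}\}$ and $V_2=\{x_{2i+1}\}$; the chord hypothesis is not even used here.

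The substantive case is $|V(D)|$ odd, and for this I would appeal directly to the construction in the proof of Theorem~\ref{thm:even_tail}. After relabeling, that proof picks an even chord $x_px_0$ minimizing $|C[x_p,x_0]|$, lets $A'$ be the set of even chords with tail in $C[x_{p+1},x_{n-1}]$, and forms the spanning subdigraph $D^*$ with arc set $(A(C)\setminus\{x_{n-1}x_0\})\cup\{x_px_0\}\cup A'$. The minimality of $|C[x_p,x_0]|$ is used there to show that every arc of $D^*$ joins the two parts $V_1=\{x_{2i}\}$ and $V_2=\{x_{2i+1}\}$, and the assumption that every vertex of $C[x_{p+1},x_{n-1}]$ is the tail of some even chord, together with the cycle $x_0x_1\dots x_px_0$ and the path $x_p x_{p+1}\dots x_{n-1}$ already sitting inside $D^*$, gives strongness. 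Taking $(V_1,V_2)$ as the bipartition then produces the desired strong $2$-partition.

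The only obstacle is a bookkeeping one: one must simply observe that the construction of Theorem~\ref{thm:even_tail} already yields a \emph{spanning} bipartite strong subdigraph, not merely an arc-colouring witnessing $\pc(D)\le 2$. This is immediate from the explicit description of $A(D^*)$, so no extra work is needed beyond citing that construction and recording the equivalence between strong bipartite spanning subdigraphs and strong $2$-partitions.
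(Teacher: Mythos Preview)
Your proposal is correct and follows exactly the paper's own reasoning: the paper simply states that Theorem~\ref{thm:chords_tail} follows from the proof of Theorem~\ref{thm:even_tail} together with the observation that a Hamiltonian digraph of even order has a strong $2$-partition, which is precisely the reduction you carry out. Your added remark that a spanning strong bipartite subdigraph yields a strong $2$-partition (via $D^*\subseteq B_D(V_1,V_2)$) makes the inference explicit, but introduces nothing new.
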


\begin{theorem}\label{thm:chords_head}
Let $D$ be a Hamiltonian digraph  and $C$ be a Hamiltonian cycle of $D$. 
If for every $v\in V(D)$  there is an even chord of $C$ with head $v$, 
then $D$ has a strong $2$-partition.
\end{theorem}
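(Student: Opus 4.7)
The plan is to split on the parity of $|V(D)|$ and reduce to machinery already established in the paper. If $|V(D)|$ is even, then the Hamiltonian cycle $C$ itself has even length, so writing $C=x_0x_1\dots x_{n-1}x_0$ and taking $V_1=\{x_{2i}\}$, $V_2=\{x_{2i+1}\}$ yields a 2-partition for which $B_D(V_1,V_2)$ contains all arcs of $C$. Since $C$ is a spanning directed cycle, $B_D(V_1,V_2)$ is strong, so $(V_1,V_2)$ is a strong 2-partition and we are done.

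For the harder case, $|V(D)|$ odd, I would reduce to Theorem~\ref{thm:chords_tail} (equivalently, to the construction inside the proof of Theorem~\ref{thm:even_tail}) via reversal. Consider the reversed digraph $D^{-1}$ with its Hamiltonian cycle $C^{-1}=x_0x_{n-1}x_{n-2}\dots x_1x_0$. An even chord $uv$ of $C$ in $D$ corresponds to an arc $vu$ of $D^{-1}$, and one checks that $vu$ is an even chord of $C^{-1}$ in $D^{-1}$ (reversing the cycle preserves the parity of the ``cycle distance'' between the two endpoints of the chord). Hence the assumption that every vertex $v$ is the head of an even chord of $C$ in $D$ translates into the statement that every vertex $v$ is the tail of an even chord of $C^{-1}$ in $D^{-1}$.

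By Theorem~\ref{thm:chords_tail}, applied to the Hamiltonian digraph $D^{-1}$ of odd order, $D^{-1}$ admits a strong 2-partition $(V_1,V_2)$. Since reversing all arcs of a strong bipartite digraph yields another strong bipartite digraph on the same vertex classes, $(V_1,V_2)$ is also a strong 2-partition of $D$. This finishes the argument.

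The only real work is the parity-preserving verification under reversal of the cycle, which is what lets us invoke Theorem~\ref{thm:chords_tail} as a black box; the rest is bookkeeping. I expect no genuine obstacle here, since the proof of Theorem~\ref{thm:even_tail} is explicitly constructive (it exhibits a strong bipartite spanning subdigraph), so the same construction, run on $D^{-1}$ and then reversed, produces the desired strong 2-partition of $D$ directly, without having to re-prove anything in the odd-order case.
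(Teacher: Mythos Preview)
Your proposal is correct and mirrors the paper's own justification: the paper derives Theorem~\ref{thm:chords_head} from the proof of Theorem~\ref{thm:even_tail} via the reversal argument underlying Corollary~\ref{cor:even_head}, together with the observation that a Hamiltonian digraph of even order has a strong $2$-partition. Your parity check that an even chord of $C$ becomes an even chord of $C^{-1}$ is exactly the point the paper uses, so there is no gap.
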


In the proof of Lemma~\ref{lem:odd}, we have shown that, for $n\ge 9$ and $k$ odd,  every circulant digraph $C_n(\{1,k\})$ has a strong 2-partition. One can see that $C_5(\{1,3\})$ has a strong 2-partition. 
Furthermore, using Theorems \ref{thm:chords_tail} and~\ref{thm:chords_head}, we obtain the following result.

\begin{theorem}
If $n\neq 7$, $S\subseteq\{1,\dots,n-1\}$, $|S|\ge 2$, and $1\in S$,  then $C_n(S)$ has a strong $2$-partition.
\end{theorem}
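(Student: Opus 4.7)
The plan is to reduce every admissible $(n,S)$ to one of the three strong-2-partition sources already established in the paper: an even spanning Hamiltonian cycle, Theorem~\ref{thm:chords_tail}, or the explicit constructions contained in (the proof of) Lemma~\ref{lem:odd} together with the stated fact that $C_5(\{1,3\})$ admits a strong 2-partition. The key monotonicity observation driving everything is that if $C_n(S')$ is a spanning subdigraph of $C_n(S)$ and $(V_1,V_2)$ is a 2-partition with $B_{C_n(S')}(V_1,V_2)$ strong, then $(V_1,V_2)$ is still a strong 2-partition of $C_n(S)$, since $B_{C_n(S)}(V_1,V_2)\supseteq B_{C_n(S')}(V_1,V_2)$. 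Hence it is enough to exhibit, in each case, a suitable $S'\subseteq S$ with $1\in S'$.

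First I would handle the easy parity case. If $n$ is even, take $S'=\{1\}$: then $C_n(\{1\})$ is a directed Hamiltonian cycle of even length, and the bipartition by parity of index gives a strong bipartite spanning subdigraph immediately. If $n$ is odd but $S$ contains some even $s$, take $S'=\{1,s\}$: for every index $i$ the arc $v_iv_{(i+s)\bmod n}$ is a chord of the Hamiltonian cycle $v_0v_1\dots v_{n-1}v_0$ (since $s\neq 1$), and its length with respect to this cycle is exactly $s$, which is even. Thus every vertex of $C_n(S)$ is the tail of an even chord of this Hamiltonian cycle, and Theorem~\ref{thm:chords_tail} delivers a strong 2-partition.

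The remaining case is $n$ odd with every element of $S$ odd. Since $|S|\ge 2$ and $1\in S$, pick an odd $k\in S\setminus\{1\}$ with $3\le k\le n-1$, and take $S'=\{1,k\}$. For $n=5$ this forces $k=3$, and the paragraph preceding the theorem statement explicitly records that $C_5(\{1,3\})$ has a strong 2-partition. For odd $n\ge 9$, the proof of Lemma~\ref{lem:odd} was carried out by constructing a strong bipartite spanning subdigraph of $C_n(\{1,k\})$, which is precisely a strong 2-partition. The only odd $n\ge 5$ left uncovered is $n=7$, which is exactly the value excluded by hypothesis; this matches the fact that Lemma~\ref{lem:C_seven_three} established $\pc(C_7(\{1,3\}))=2$ via a direct 2-colouring rather than via a strong bipartite spanning subdigraph.

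The routine step is the even-chord verification in the second case; there is no serious obstacle, since the bulk of the technical work (the $n\ge 9$, odd-$k$ construction) has already been done inside the proof of Lemma~\ref{lem:odd}, and what remains is only to assemble the monotonicity argument and cite the three small endpoint facts ($n$ even, $n=5$, and the even-chord reduction). The mildly delicate point worth stating explicitly is why $n=7$ must be excluded: none of the three mechanisms applies, because both non-trivial choices $S=\{1,3\}$ and $S=\{1,5\}$ in $C_7$ produce isomorphic digraphs whose strong 2-partition status is not established anywhere in the paper.
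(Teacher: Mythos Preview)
Your proposal is correct and follows essentially the same approach as the paper: the paper's own argument (given as a one-paragraph remark) likewise assembles the result from the strong bipartite spanning subdigraphs built in the proof of Lemma~\ref{lem:odd} for odd $n\ge 9$, the stated fact about $C_5(\{1,3\})$, and the even-chord criteria of Theorems~\ref{thm:chords_tail}/\ref{thm:chords_head}. Your write-up is in fact more explicit, spelling out the monotonicity step and the case split, whereas the paper leaves these implicit.
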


It would be interesting to consider the case of Hamiltonian digraphs with respect to Theorem~\ref{thm:two-partition}. We can thus ask the following question.

\begin{question}
What is the complexity of deciding whether a Hamiltonian digraph $D$ has a strong $2$-partition?
\end{question}

 
\end{document}